\newcommand{\Mdef}[2]{\newcommand{#1}{\relax \ifmmode #2 \else $#2$\fi}}
\subjclass[2010]{Primary: 92E15, 18G35, 57M15, 80A50 Secondary: 05C05, 05C21, 05E45, 82C31}
\keywords{CW complex, Boltzmann distribution, spanning co-tree, combinatorial Hodge theory, combinatorial Laplacian}
\newtheorem{thm}{Theorem}[section]  
\newtheorem*{un-no-thm}{Theorem}
\newtheorem{cor}[thm]{Corollary}     % Numbered along with thm
\newtheorem{lem}[thm]{Lemma}         % Numbered along with thm
\newtheorem{prop}[thm]{Proposition}
\newtheorem{bigthm}{Theorem}
\newtheorem{bigcor}[bigthm]{Corollary}
\theoremstyle{definition}
\newtheorem{defn}[thm]{Definition}   % Numbered along with thm
\theoremstyle{definition}
\theoremstyle{definition}
\theoremstyle{remark}
\newtheorem{rem}[thm]{Remark}
\newtheorem{notation}[thm]{Notation}
\newtheorem*{acks}{Acknowledgements}
\newtheorem*{cl}{Claim} 
\newtheorem*{out}{Outline}
\newtheorem*{intro-rem}{Remark}
\newtheorem*{intro-rems}{Remarks}
\newtheorem{ex}[thm]{Example}
\newcommand{\blocktheorem}[1]{%
  \csletcs{old#1}{#1}% Store \begin
  \csletcs{endold#1}{end#1}% Store \end
  \RenewDocumentEnvironment{#1}{o}
    {\par\addvspace{1.5ex}
     \noindent\begin{minipage}{\textwidth}
     \IfNoValueTF{##1}
       {\csuse{old#1}}
       {\csuse{old#1}[##1]}}
    {\csuse{endold#1}
     \end{minipage}
     \par\addvspace{1.5ex}}
}
\newcommand{\R}{\mathbb{R}}
\newcommand{\Z}{\mathbb{Z}}
\newcommand{\Q}{\mathbb Q}
\newcommand{\ra}{\rightarrow}
\newcommand{\lra}{\longrightarrow}
\title[A higher Boltzmann distribution]{A higher Boltzmann distribution}
\author[M.~J. Catanzaro]{Michael J.\ Catanzaro}
\address{Dept.~ of Mathematics \\
          Wayne State University \\
	  Detroit, MI 48202}
\email{mike@math.wayne.edu}
\author[V.~Y. Chernyak]{Vladimir Y.\ Chernyak}
\address{Dept.~  of Chemistry\\
         Wayne State University\\
         Detroit, MI 48202}
\email{chernyak@chem.wayne.edu}
\author[J.~R. Klein]{John R.\ Klein}
\address{Dept.~  of Mathematics\\ Wayne State
	  University\\ Detroit, MI 48202}
\email{klein@math.wayne.edu}
\date{\today}
\begin{document}

\maketitle

\begin{abstract} We characterize the classical Boltzmann distribution
as  the unique solution to a certain combinatorial Hodge theory problem in homological degree zero on a finite graph. By substituting
for the graph a CW complex $X$ and a choice of degree $d \le \dim X$,
we define by direct analogy a higher dimensional 
Boltzmann distribution $\rho^B$ as
a certain real-valued cellular $(d-1)$-cycle.
We then give an explicit formula
for $\rho^B$. 

We explain how these ideas relate to the Higher Kirchhoff
Network Theorem of \cite{Catanzaro:Kirchhoff}. We also give an improved version of the Higher Matrix-Tree Theorems of \cite{Catanzaro:Kirchhoff}. 
\end{abstract}

\def\:{\colon}
\setcounter{tocdepth}{1}
\tableofcontents
\addcontentsline{file}{sec_unit}{entry}

%%%%%%%%
\section{Introduction}
\subsection{Motivation} 
Physics and chemistry are rife with processes 
in which some quantity varies with time in 
a complicated, irregular way. The evolution of such a system is often
modeled by a so-called {\it master equation,} which takes the form
$\dot p = H p$. Here, $p$ is a time dependent
distribution on the states of the system and 
$H$, the master operator, governs 
 time evolution. 
The stationary distribution of a master equation 
is known as the {\it Boltzmann distribution} (or {\it Gibbs measure}).

For example, the classical Boltzmann distribution from thermodynamics governs how 
the system's states are populated, depending on their energy. 
Specifically, the probability of the system
 to be in state $j$ with energy $E_j$ is
proportional
to $e^{-\beta E_j}$, where $\beta = \tfrac{1}{k_BT}$, $T$ is the 
temperature and $k_B$ is the Boltzmann constant. The 
Boltzmann distribution
is then the normalized 
distribution
\[
  \rho^B = \frac{\sum_j e^{-\beta E_j} j}{\sum_j e^{-\beta E_j}}\, .
\]

%Since the energy states are discrete and finite, we may think of the particles
%residing on the vertices of a graph labeled by energy levels. That is, 
%a particle populates vertex $i$ if it has energy $E_i$. The edges of the graph
%play no physical role, and so to simplify matters topologically, we take
%them so the graph is connected.

 A 
Markov process with discrete state space
is conveniently described by a state diagram. The latter is
 a directed graph $\Gamma$ whose vertices label the states and whose edges
label the transition probabilities between states. Each directed edge
 $e$ of $\Gamma$ is equipped with a {\it transition rate}  
$k_{e}$.

A certain class of processes is obtained by the following procedure: 
start with a connected undirected
graph $X$ (i.e., a CW complex of dimension one) with vertex set $X_0$ and edge set $X_1$.
Then $\Gamma$ will be the {\it double} of $X$, i.e., the directed graph
having the same vertices, where each edge of $X$ is now replaced
by a pair of opposing directed edges.  Each directed edge of $\Gamma$ is given
by a pair $(i,\alpha)$, where
$i$ is a vertex of $X$ and $\alpha$ is an edge of $X$ that is incident to $i$.
The transition rates of the process are given as follows: choose
a real number $\beta >0$ (inverse temperature)
and functions
 $E\: X_0 \to \Bbb R$ and $W\: X_1 \to \Bbb R$.
Then the transition rate across  $(i,\alpha)$ 
is given by 
\[
k_{i,\alpha} := e^{\beta(E_i - W_{\alpha})}\, .
\]
When the transition rates are written in this way, the process is said be in {\it Arrhenius form}.

The process just described is not completely general since
$\Gamma$ is a double. In fact, the above is an
example of a process that is in
{\it detailed balance} (or time reversible)
 in the sense that there exists a distribution
$\pi\: \Gamma_0 \to \Bbb R_+$ such that 
\begin{equation} \label{eqn:detailed-balance}
\pi(i) k_{i,\alpha} = \pi(j)k_{j,\alpha}\, ,
\end{equation}
for any edge $\alpha$ of $X$ with joining a pair of vertices $i$ and $j$ 
(see e.g., \cite[ch.~1]{Kelly}; in our example, take $\pi(i) = e^{-\beta E_i}$).
The number $\pi(i) k_{i,\alpha}$ is the probability flux across
$(i,\alpha)$. Equation \eqref{eqn:detailed-balance}
says that the pair of states $i$ and $j$
are in equilibrium along $\alpha$.
It is not difficult to show that every process (with discrete time and finite state space) in detailed balance can be written in Arrhenius form.

If $X$ is finite,
the Boltzmann distribution 
arises as the normalized vector
\[
\tfrac{1}{Z}\sum_{i\in X_0} \pi(i)i\, , \qquad Z := \sum_{i\in X_0} \pi(i)\, .
\]
Consequently, the Boltzmann distribution
is a normalized equilibrium distribution for the process.

The higher Boltzmann distribution developed in this paper is associated with
a Markov process on a CW complex which is a natural generalization of the process
we described above on a graph. With the intention of providing
motivation, the process is described below. However,
we will not investigate its properties here as is not within
the current scope.

Let $X$ be a finite connected CW complex,  let $d\le  \dim X$ be a fixed positive integer, and let $X_k$ denote the set of $k$-cells.
With respect to a mild technical assumption, we will show how associate a Markov process. 
A state of the process
will be an integer-valued cellular $(d-1)$-cycle $\zeta$ on $X$ in a fixed homology class.
Roughly, a directed edge from  $\zeta$ to another state $\zeta'$ is given by an 
``elementary homology'' in the sense that 
\[
\zeta' = \zeta + u \partial e\, ,
\]
 where $u$ is an integer and $e$ is a $d$-cell that is incident to $\zeta$ at 
a specified $(d-1)$-cell $f$. We also require that $e$ not be incident to $\zeta'$ at $f$.

When $d = \dim X = 1$, it will turn out that the process coincides with 
the one constructed above. 
Before giving the details, we explain the technical assumption on $X$,
which amounts a  homological condition
that will eventually be removed as it is not  required for the results of this paper:

\begin{defn} The CW complex $X$ is  {\it $d$-pseudo-regular} if
for every $d$ cell $e \in X_d$ and any $(d-1)$-cell $f \in X_{d-1}$, we have
\[
\langle \partial e , f \rangle \in \{0,\pm 1\}\, ,
\]
where the above denotes the coefficient of $f$ in
the boundary of $e$.
If $X$ is regular then it is $d$-pseudo-regular for all $d$.
In particular any connected polytopal complex
\cite[chap.~5]{Ziegler} or any connected finite simplicial complex
is $d$-pseudo-regular.
If $d=\dim X =1$, then $X$ is automatically $d$-pseudo-regular.
\end{defn} 

Assuming $X$ is $d$-pseudo-regular, we can now describe the process.
Consider the oriented graph in which a vertex is given by
an integer $(d-1)$-cycle $\zeta\in Z_{d-1}(X;\Z)$. 
A  directed edge from a vertex $\zeta$ to a vertex $\zeta'$ 
is given by a pair
\[
(f,e)
\]
in which
\begin{itemize}
\item $e \in X_d$ is a $d$-cell and $f\in X_{d-1}$ is a $(d-1)$-cell;
\item $\langle \zeta,f\rangle \ne 0 \ne \langle \partial e,f\rangle \ $;
\item $\langle \zeta',f\rangle = 0$.
\end{itemize}
Note that the above conditions imply  
\[
\zeta' \,\, =\,\,  \zeta - 
\langle \zeta,f\rangle \langle \partial e,f\rangle \partial e \, ,\]
so $\zeta'$ is obtained from $\zeta$ by means of an elementary homology.
We denote the directed edge by $(f,e,\zeta)$.
 
We now fix a base vertex 
\[
\hat x \in Z_{d-1}(X;\Z)\, ,
\]
such that the homology class $[\hat x] \in H_{d-1}(X;\Z)$ is non-trivial.
Consider the subgraph 
generated by $\hat x$ in the sense that a vertex $\zeta$
lies in this subgraph if there exists a finite sequence 
of directed edges $\hat x = \zeta_0 \to \zeta_1 \to \cdots \to \zeta_{k} =\zeta$, i.e., a finite directed
path from $\hat x$ to $\zeta$. 
An  edge belongs to this subgraph  if it occurs in such a path. 
We denote this subgraph by
 \[
 \Gamma_{X,\hat x}\, .
 \]

\begin{defn} The {\it cycle incidence graph} of  $(X,\hat x)$ is
the directed graph $\Gamma_{X,\hat x}$.
\end{defn}

\begin{ex} Suppose $X$ has dimension one and 
 $\hat x := i$ is any vertex of $X$. 
In this situation
 $\Gamma_{X,i}$ coincides with the double of $X$.
 \end{ex}

\begin{rem} If $\dim X > 1$ then the cycle-incidence graph
is usually not a double, so the process 
won't be detailed balance. 
Furthermore, $\Gamma_{X,\hat x}$ typically  has infinitely many vertices.
\end{rem} 

\begin{ex}\label{ex:the-example} Let $X$ be the 
two dimensional torus with
regular cell structure indicated by the following picture:
\begin{center}
\begin{tikzpicture}[scale=2]
\draw ;
\draw [red] [ultra thick] (0,0) -- (0,1) -- (0,2);
\draw [thick] (1,0) -- (1,1) -- (1,2);
\draw [thick] (2,0) -- (2,1) -- (2,2);
\draw [thick] (0,0) -- (1,0) -- (2,0);
\draw [thick] (0,1) -- (1,1) -- (2,1);
\draw [thick] (0,2) -- (1,2) -- (2,2);
\node[align=left, below] at (.5,1.5)%
{$e_1$};
\node[align=left, below] at (1.5,1.5)%
{$e_2$};
\node[align=left, below] at (.5,.5)%
{$e_3$};
\node[align=left, below] at (1.5,.5)%
{$e_4$};
\end{tikzpicture}
\end{center}
Then $X$ is obtained from the picture  by identifying
the opposite sides of the outer square.
Consequently, there are $4$ two-cells ($e_1,\dots, e_4$), 8 one-cells and 4 zero-cells. Our initial one-cycle $\zeta_0 = \hat x$ is the meridian given by the red segment. In this case the cycle incidence graph has infinitely many vertices. To see this 
start at $\hat x$ and jump across $e_1$. Modulo a choice of
orientations for the cells, the jump results in the cycle
given by $\zeta_1 = \hat x+ \partial e_1$, and the latter is incident to 
the $e_2$. Jumping across $e_2$ results in the cycle $\zeta_2 = \hat x + \partial e_1 - \partial e_2$ which is incident to $e_1$. Moreover
$\zeta_2$ is distinct from $\hat x$. Iterating this procedure
by jumping first across $e_1$ and then across $e_2$, results
in an infinite number of distinct vertices $\zeta_0,\zeta_1,\zeta_2,\dots$ (with $\zeta_{2k+1} = \hat x + (2k+1)\partial e_1 - (2k)\partial e_2$
and $\zeta_{2k} = \hat x + (2k)(\partial e_1 - \partial e_2)$)
 in the cycle incidence graph of $(X,\hat x)$.
\end{ex}

To complete the description of the process, we must label
the edges of $\Gamma_{X,\hat x}$ with transition rates. This is done as follows:
choose  functions
$E\: X_{d-1} \to \R$ and $W\: X_d \to \R$ and an inverse temperature
$\beta > 0$.
Then the transition rate of the directed edge $(f,e,\zeta)$ is given by
\begin{equation} \label{eqn:transition-rate}
k_{f,e,\zeta} :=  e^{\beta(W_f - E_e)}\, .
\end{equation}
The description of the process is now complete.

Summarizing, as the initial $(d-1)$-cycle evolves with respect to the process, 
it jumps from cycle to cycle by adding/subtracting the boundaries of $d$-cells, analogous
to a particle `jumping' over an edge on a graph. That is, the evolving cycle does not change its homology class, and so
each homology class will have its own distinct dynamics. 

One can interpret
the higher Boltzmann distribution defined in this paper 
as the long-time limit of the {\it average
distribution} within each homology class. By ``average,'' we mean the
average over the stochastic 
process in the sense of probability theory. 
This will be pinned-down in a subsequent paper.

\subsection{The combinatorial Hodge problem} We now turn to the problem
of formulating the higher Boltzmann distribution in terms of combinatorial Hodge theory.
Let $X$ be a finite CW complex and let $d \le \dim X$ be as above. 
Henceforth, we do {\it not}
require $X$ to be $d$-pseudo-regular.
Let $C_*(X;\R)$ be the cellular chain complex of $X$
with real coefficients. Each $C_j(X;\R)$
 is equipped with a standard inner product $\langle{-},{-} \rangle$ which is determined
by declaring $X_j$ to be an orthonormal basis.

\begin{defn} \label{eqn:Riemann-structure}
A {\it scalar structure} on  $X$ consists  of functions 
\[
E_j: X_{j} \to \mathbb{R},\qquad j = 0,1,\dots
\]
When $j = d$ we typically write $E := E_{d-1}$ and $W := E_d$.
\end{defn}
Let $\beta > 0$ be given.
Then the function $E_j$ 
equips $C_j(X;\R)$ with a {\it modified inner product}
\[
  \langle x, y \rangle_{E_j} := e^{\beta E_j(x)} \langle x, y \rangle
   \qquad x, y \in X_{j}\, .
\]
In our next formulation, we only make use of the function $E = E_{d-1}$.
Define the formal adjoint
\[
\partial_E^*\: C_{d-1}(X;\R)\to C_d(X;\R) 
\] 
using the standard inner product on $C_d(X;\R)$
and the modified inner product on $C_{d-1}(X;\R)$,
i.e.,  $\partial_E^* = \partial^*e^{\beta E}$, where
$\partial^\ast$  is the formal adjoint in the standard inner product structures.

\begin{defn} Let $X$ and $E$ be as above.
The {\em combinatorial Hodge problem} in degree $d-1$ is 
  the following: given $x \in H_{d-1}(X;\R)$, find an explicit 
  formula for the unique cycle $\rho \in Z_{d-1}(X;\R)$ such that
  \begin{itemize}
    \item $\rho$ represents $x$, and 
    \item $\rho$ is co-closed, i.e., $\partial_E^* \rho = 0$.
  \end{itemize}
\end{defn}

The condition that $\rho$ be co-closed can be re-stated as
the assertion that $\rho$ should
be orthogonal to any boundary with respect to the modified inner product:
\[
  \langle \alpha, \partial_E^* \rho \rangle = \langle \partial \alpha, \rho \rangle_E = 0\, ,
\]
for any $\alpha \in X_d$.

\begin{rem}\label{rem:splitting}
The combinatorial Hodge problem for $(X,E)$ is equivalent to finding
 an orthogonal splitting 
of the quotient homomorphism $p$ appearing in the short exact sequence
\begin{equation*}
 0 \lra B_{d-1}(X;\R) \lra Z_{d-1}(X;\R) \stackrel{p}{\lra} H_{d-1}(X;\R) \lra 0,
\end{equation*}
with respect to the modified inner product on $Z_{d-1}(X;\R) \subset C_{d-1}(X;\R)$.
\end{rem}

The original Hodge problem asks to find a unique harmonic
representative for any cohomology class on a compact,
orientable Riemannian manifold. By relaxing the hypotheses
to a connected CW complex, we are able to write down an
explicit formula.  Our solution to the combinatorial Hodge problem 
will involve a summation over {\it spanning co-trees}. 
The latter are certain subcomplexes of $X$ of dimension $d-1$ which
are a higher dimensional analog of the vertices of a graph. They
are homologically dual to the higher dimensional 
spanning trees of \cite{Catanzaro:Kirchhoff} and hence their name.
In the following definition, $\beta_k(X) = \dim H_k(X;\mathbb{Q})$ denotes the 
$k$th Betti number of $X$, and $X^{(k)}$ denotes the $k$-skeleton of $X$.

\begin{defn} \label{defn:spanning} A $(d-1)$-dimensional
{\em spanning co-tree} for $X$ is a 
  subcomplex $L \subset X$ such that
  \begin{enumerate}
  \item[\rm (a)] The inclusion $i_L\: L \subset X$ induces an isomorphism
  \[
  i_{L\ast}\: H_{d-1}(L;\Q) @> \cong >> H_{d-1}(X;\Q)\, ;\]
  \item[\rm (b)] $\beta_{d-2}(L) = \beta_{d-2}(X)$;
  \item[\rm (c)] $X^{(d-2)} \subset L \subset X^{(d-1)}$.
  \end{enumerate}
Clearly, the number of spanning co-trees is finite.
\end{defn}

\begin{rem} 
 Equivalently, conditions (a)-(c) are equivalent to conditions (a'),(b) and (c),
where
\begin{enumerate}
\item[\rm (a')] The relative homology group $H_{d-1}(X,L;\Q)$ is trivial.
\end{enumerate}
\end{rem}

\begin{rem} 
We are indebted to a referee for pointing out that
our notion of spanning co-tree is equivalent to that of a
{\it relatively acyclic complex} in \cite{DKM2} as well as
to the {\it complement of a cobase} in 
\cite[p.~156]{Lyons}.
\end{rem} 

Spanning co-trees come packaged with auxiliary data that will be used for obtaining
the desired splitting. Observe that the projection $Z_{d-1}(L;\Z) \to H_{d-1}(L;\Z)$ 
is an isomorphism since $L$ has no $d$-cells. Let
$\phi_L$ be the composite
\begin{equation}
  \phi_L\:Z_{d-1}(L;\Z) @> \cong >> H_{d-1}(L;\mathbb Z) @> i_{L\ast} >> H_{d-1}(X;\mathbb Z)\, .
\end{equation}
Then $\phi_L$ becomes an isomorphism after tensoring
with the rational numbers by the defining properties of $L$. 
We invert $\phi_L$ rationally to obtain a homomorphism of rational vector spaces
\begin{equation}
  \psi_L\: H_{d-1}(X;\Q) @> (\phi_L \otimes \Q)^{-1}>> Z_{d-1}(L;\Q)
  @> i_{L\ast}>> Z_{d-1}(X;\Q)\, .
\end{equation}
Since  $H_{d-1}(X,L;\Q)$ is trivial, the group $H_{d-1}(X,L;\Bbb Z)$ is finite.
Let
\begin{equation} \label{eqn:aL}
a_L := |H_{d-1}(X,L;\Bbb Z)|
\end{equation}
denote its order.

We define the {\em weight} of $L$ to be the real number
\begin{equation} \label{eqn:tauL}
  \tau_L = \tau_L(E) := a_L^2 \prod_{b \in L_{d-1}} e^{-\beta E_{b}}\, .
\end{equation}

With respect to the above definitions, we can state
the solution to the combinatorial Hodge problem:

\begin{bigthm}[Boltzmann Splitting Formula] 
\label{thm:Hodgeprob} Let $(X,E)$  and $x\in H_{d-1}(X;\R)$ be as above.
  Then the solution to the combinatorial Hodge problem is given by
  $\rho =\Psi(x)$, in which $\Psi\: H_{d-1}(X;\R)\to Z_{d-1}(X;\R)$ is the homomorphism
  \[
    \tfrac{1}{\nabla} \sum_L \tau_L \psi_L\, , 
  \]
  where the sum runs over all $(d-1)$-dimensional spanning co-trees $L$ and $\nabla = \sum_L \tau_L$. 
 \end{bigthm}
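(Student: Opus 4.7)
My plan is to verify separately the two properties that characterize the unique orthogonal section of $p\colon Z_{d-1}(X;\R)\to H_{d-1}(X;\R)$ described in Remark \ref{rem:splitting}: that $\Psi$ is a section of $p$, and that its image is orthogonal to $B_{d-1}(X;\R)$ with respect to $\langle\cdot,\cdot\rangle_E$.

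The first property is essentially formal. Unwinding the construction, $\phi_L$ sends a cycle $\zeta\in Z_{d-1}(L;\Z)$ to the homology class of $i_L(\zeta)$ in $H_{d-1}(X;\Z)$, so for each $x\in H_{d-1}(X;\R)$ the cycle $\psi_L(x)=i_L((\phi_L\otimes\Q)^{-1}(x))$ projects to $x$ under $p$. Hence each $\psi_L$ is a section of $p$, and averaging gives $p\circ\Psi=\tfrac{1}{\nabla}\sum_L\tau_L\cdot\mathrm{id}=\mathrm{id}$. Uniqueness of the orthogonal section of $p$ is immediate from linear algebra, so the only substantive task is to show that $\Psi$ lands in the orthogonal complement of $B_{d-1}(X;\R)$.

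That task reduces to proving
\[
\sum_L \tau_L\,\langle \partial e,\ \psi_L(x)\rangle_E \;=\; 0
\]
for every $d$-cell $e\in X_d$ and every $x\in H_{d-1}(X;\R)$. My strategy is to fix $x$ and $e$, and to expand each $\psi_L(x)$ in the cellular basis of $L_{d-1}\subset X_{d-1}$. Because $\psi_L$ is defined via $(\phi_L\otimes\Q)^{-1}$, its matrix entries are rational numbers whose denominators are governed by $a_L=|H_{d-1}(X,L;\Z)|$, via Cramer's rule applied to the submatrix of $\partial\colon C_d(X;\Z)\to C_{d-1}(X;\Z)$ whose columns are indexed by the cells of $X_{d-1}\setminus L_{d-1}$. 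The weight $\tau_L=a_L^{2}\prod_{b\in L_{d-1}}e^{-\beta E_b}$ is chosen precisely so that $\tau_L\psi_L$ becomes a signed minor of the weighted up-Laplacian $\partial^{\ast}e^{\beta E}\partial$ on $C_d(X;\R)$. Summing over all spanning co-trees $L$, the refined higher matrix-tree/cotree theorems of this paper and of \cite{Catanzaro:Kirchhoff} identify $\nabla=\sum_L\tau_L$ with a weighted pseudo-determinant of this Laplacian, and a Cauchy--Binet-style expansion then collapses the sum into a multiple of $\langle\partial e,\pi_{\mathrm{harm}}(z)\rangle_E$ for any cycle $z$ representing $x$, where $\pi_{\mathrm{harm}}$ denotes orthogonal projection onto the harmonic subspace. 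This quantity vanishes by definition, which yields co-closedness.

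The principal obstacle is making the identification of $\tau_L\psi_L$ with Laplacian minors truly precise: the signs must be tracked, and the appearance of $a_L^{2}$ (rather than $a_L$) is exactly the torsion contribution that turns the Cauchy--Binet sum into a genuine determinant of the Laplacian rather than a sum of squares of minors. This torsion bookkeeping is precisely what the improved matrix-tree/cotree machinery is designed to handle; once that dictionary is in place, co-closedness follows from the standard identity expressing orthogonal projection onto the harmonic subspace in terms of the pseudo-inverse of the Laplacian.
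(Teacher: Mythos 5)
There is a genuine gap. You correctly reduce the problem to two things (that $\Psi$ is a section of $p$, and that $\Psi$ lands in the $E$-orthogonal complement of $B_{d-1}(X;\R)$), and your argument for the first is fine. But the second, co-closedness, is only sketched, and the sketch as written would not compile into a proof. The operator you invoke---the weighted up-Laplacian $\partial^\ast e^{\beta E}\partial$ on $C_d(X;\R)$---is the wrong one: its Cauchy--Binet/Cramer expansion is indexed by subsets of $d$-cells and produces spanning-\emph{tree} data, not the subsets of $(d-1)$-cells that index spanning co-trees. The determinants controlling $\psi_L$ and $a_L$ come from the composite $C_{d-1}(L)\to C_{d-1}(X)/B_{d-1}(X)$ (cf.\ Proposition~\ref{prop:co-tree-equiv}), so the relevant Gram matrix is that of the quotient $\pi\colon C_{d-1}\to C_{d-1}/B_{d-1}$ (equivalently of $p$ after Lemma~\ref{lem:splitting}) in the modified inner product, not a Laplacian on $C_d$. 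You also appeal to the improved matrix-tree theorem to identify $\nabla$, but by itself that identity does not give the individual cancellations $\langle\partial e,\Psi(x)\rangle_E=0$; the ``Cauchy--Binet-style collapse'' to $\langle\partial e,\pi_{\mathrm{harm}}(z)\rangle_E$ is precisely the step you explicitly leave open (``the principal obstacle'').

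The paper avoids this entirely. After reducing to the case of trivial $(d-2)$-skeleton (Remark~\ref{rem:co-tree-equiv}) and passing from $p$ to $\pi\colon C_{d-1}\to C_{d-1}/B_{d-1}$ via Lemma~\ref{lem:splitting}, it applies the Moore--Penrose summation formula of Berg and Ben-Tal--Teboulle (Theorem~\ref{thm:MPformula} with Remark~\ref{rem:MPformula}) to $\pi$, using the weighting $b\mapsto e^{\beta E_b}$ and the integral lattice in the target. That formula already asserts that the resulting section is the orthogonal one; the remaining work is purely combinatorial bookkeeping, namely matching the invertible index sets $S$ with spanning co-trees (Proposition~\ref{prop:co-tree-equiv}), $A_S$ with $\phi_L$, and $\det(A_S)^2$ with $\tau_L/\theta_X^2$ up to the common factor $\theta_X^2$ that cancels. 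What your outline is doing, in effect, is attempting to re-derive the Berg/Ben-Tal formula from scratch for this particular map; if you want to pursue that route you must (i) work with the Gram matrix of $\pi$, not the up-Laplacian on $C_d$, (ii) carry out the Cramer/Cauchy--Binet identification with signs, and (iii) account for the torsion factor $\theta_X^2$, which is precisely the content the paper handles through the lattice normalization in Remark~\ref{rem:MPformula} and Remark~\ref{rem:co-tree-equiv}. As it stands, the substantive half of the theorem is asserted rather than proved.
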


Theorem \ref{thm:Hodgeprob} enables us to define the higher Boltzmann distribution:

\begin{defn} \label{defn:Hodgeprob} Let $(X,E)$ be as above 
and let  $x \in H_{d-1}(X;\Z)$ be an integer homology class.
The {\it higher Boltzmann distribution}
at $x$
is the real $(d-1)$-cycle
\[
\rho^B := \tfrac{1}{\nabla}\sum_L \tau_L \psi_L(\bar x) \in Z_{d-1}(X;\R)\, ,
\]
where $\bar x \in H_{d-1}(X;\Q)$ is the image of $x$ with respect to the homomorphism
$H_{d-1}(X;\Z) \to H_{d-1}(X;\Q)$.
\end{defn}

\begin{rem}\label{rem:classicalBoltz}
  The classical Boltzmann distribution  is a
  special case of Definition ~\ref{defn:Hodgeprob}: 
let $X$ be finite connected graph 
 and take $d=1$.  Then the spanning co-trees of $X$ are given by 
  the vertices. We take $x\in H_0(X;\Z)\cong \Z$ to be the canonical generator
  (given by choosing a vertex of $X$; the generator is independent of this choice).
  For a vertex $L = j$  the normalized weight is given by 
 \[
 \nabla^{-1}\tau_L \,\, = \,\,   Z^{-1}e^{-\beta E_j}\, ,
 \qquad Z = \sum_{i\in X_0} e^{-\beta E_i} \, ,
 \] 
since $\phi_L$ is an integral isomorphism. Then $\psi_L(\bar x) = j$ and 
the assertion follows.
\end{rem}

\begin{rem} \label{rem:E=0} When $E = 0$,
the coefficients $\tau_L$ are rational numbers and the map 
$\Psi$ is defined as a homomorphism of rational vector spaces
$H_{d-1}(X;\Q)\to Z_{d-1}(X;\Q)$. If we further assume that
$x$ is a rational homology class, then
the solution to the combinatorial Hodge problem
gives an explicit expression for the 
Harmonic ``forms'' with respect to the combinatorial Laplacian
$-(\partial\partial^\ast+\partial^\ast \partial)\:  C_{d-1}(X;\Q) \to C_{d-1}(X;\Q)$.
\end{rem}

%\begin{rem} In \cite{Catanzaro:Kirchhoff}, we
% proved a higher dimensional version Kirchhoff's theorem for electrical networks. In that
% setting, one chooses an energy functional $W\: X_d\to \R$ and one wants to know a formula
% for the orthogonal projection $C_d(X;\R) \to Z_d(X;\R)$ in terms of the modified inner 
%  product on $C_d(X;\R)$ defined by $W$ (cf.\ Remark \ref{rem:oldKthm} below).  
% We obtained a summation formula for the
% projection that is reminiscent to the formula appearing in Theorem \ref{thm:Hodgeprob},
% but in that case, the sum is over the higher dimensional {\it spanning trees} of $X$. Our
% proof used  the biased Laplacian $\partial^\ast\: C_{d-1}(X;\Q) \to 
% C_{d-1}(X;\Q)$, where $\partial_W^\ast$ is defined using the modified inner product on
% the rational $d$-chains.  We discuss in Remark \ref{rem:oldKthm} an alternative %derivation of this result places it in tandem to Theorem \ref{thm:Hodgeprob}.
% \end{rem}

\begin{rem}
The proof we give of Theorem \ref{thm:Hodgeprob}
is an application of the theory of generalized inverses
to the quotient map $p \:Z_{d-1}(X;\R) \ra H_{d-1}(X;\R)$
(cf.\ \cite{Moore:Reciprocal}, \cite{Penrose:general}, \cite{Ben-Israel:GenInv}). In the late 1980s a summation formula was given for the Moore-Penrose pseudo-inverse
 \cite{Berg:Three}, \cite{Ben-Tal:GeomProp}; this is the formula we make use of.
When writing the current paper, we also came to realize that 
by applying the summation formula to split the inclusion map
\[
Z_{d}(X;\R) @> \subset >> C_d(X;\R)\, ,
\]
one gets another proof of our  higher dimensional analog of Kirchhoff's theorem on electrical networks \cite{Catanzaro:Kirchhoff} (see Remark 
\ref{rem:oldKthm} below).
\end{rem}

\begin{rem} The main application
of Theorem~\ref{thm:Hodgeprob} will appear
in the first author's
Ph.~D. thesis \cite{Catanzaro:thesis}. 
\end{rem}

\begin{rem} It is tempting to speculate whether
a result like Theorem \ref{thm:Hodgeprob} 
holds in the case of a Riemannian manifold. For this, one 
needs notion of spanning co-tree adapted to the space of 
differential forms. Then the sum appearing in the Theorem~\ref{thm:Hodgeprob} 
would presumably be replaced by a convergent infinite series of operators
indexed over the set of spanning co-trees. This would give
an explicit solution to the main result of classical Hodge theory.
\end{rem}

\subsection{Asymptotic behavior}  
Given $(X,E)$ and $d$ as above, with $E$ suitably generic, it turns out that
sum of Theorem \ref{thm:Hodgeprob} is asymptotic as a function of
 $\beta$ to the term with highest weight.
To explain this,
let $\mathcal F^\ast := \mathcal F^\ast_{d-1}(X)$ denote the  set of 
$(d-1)$-dimensional spanning co-trees of $X$.
Let
\[
E_{\mathcal F^\ast}\:  \mathcal F^\ast \to \mathbb R
\]
be the functional given by
\[
L \mapsto \sum_{b\in L_{d-1}} E_b\, .
\]
A scalar function $E\: X_{d-1} \to \R$ is said to be
{\it non-degenerate} if it is one-to-one. This is clearly a generic
condition.
  
As spanning co-trees are a matroid basis 
\cite[p.~156]{Lyons},  a greedy algorithm shows that
if $E$ is non-degenerate then the function $E_{\mathcal F^\ast}$
possesses a unique minimum $L^{\mu}$.\footnote{See e.g.,
\cite[\S1.8]{Oxley}.  A previous draft of the paper had stronger assumptions on $E$. We are
grateful to a referee for pointing out to us that non-degeneracy suffices.} 

As the parameter $\beta$ tends to $\infty$,
it is easy to check that the 
 operator $\Psi$ appearing in  Theorem~\ref{thm:Hodgeprob} is asymptotic to
 $\psi_{L^{\mu}}$ when we consider these as vector-valued functions with components
 indexed over $\mathcal F^\ast$ 
(see the proof of \cite[lem.~3.6]{CKS}). More precisely, we have

\begin{bigcor} Let  $(X,E)$ and $x\in H_{d-1}(X;\Z)$ be as above. Assume in addition that
 $E\: X_{d-1} \to \R$ is  non-degenerate. 
 Then we have
\[
\lim_{\beta \to \infty} \frac{\tau_L}{\nabla} =
\begin{cases}
0 \mbox{\phantom{asdf}} L \ne L^\mu ;\\
1 \mbox{\phantom{asdf}} L = L^\mu\, .
\end{cases}
\]
In particular, $\rho^B(x)$ is asymptotic in $\beta$ to 
the rational $(d-1)$-cycle $\psi_{L^\mu}(x)$. 
Consequently, in the low
temperature limit\footnote{The limit $\beta\to \infty$  is called the {\it low temperature limit} (cf.\ \cite{Catanzaro:Kirchhoff}).}
 the higher Boltzmann distribution rationally quantizes.
\end{bigcor}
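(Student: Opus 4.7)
The proof is essentially a Laplace-type asymptotic for a finite sum of exponentials, made clean by the uniqueness of the minimizer $L^\mu$ that the excerpt has already established.

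My plan is to begin by unwinding the definition of $\tau_L$ from equation (\ref{eqn:tauL}):
\[
\tau_L = a_L^2 \prod_{b\in L_{d-1}} e^{-\beta E_b} = a_L^2 \exp\bigl(-\beta E_{\mathcal F^\ast}(L)\bigr),
\]
so that the normalization reads
\[
\frac{\tau_L}{\nabla} = \frac{a_L^2 \,e^{-\beta E_{\mathcal F^\ast}(L)}}{\sum_{L'\in \mathcal F^\ast} a_{L'}^2\, e^{-\beta E_{\mathcal F^\ast}(L')}}\,.
\]
The key algebraic move is then to factor the quantity $\exp(-\beta E_{\mathcal F^\ast}(L^\mu))$ out of both the numerator and denominator. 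Writing $\Delta_{L'} := E_{\mathcal F^\ast}(L') - E_{\mathcal F^\ast}(L^\mu) \ge 0$, this yields
\[
\frac{\tau_L}{\nabla} = \frac{a_L^2 \,e^{-\beta \Delta_L}}{a_{L^\mu}^2 + \sum_{L' \ne L^\mu} a_{L'}^2 \,e^{-\beta \Delta_{L'}}}\,.
\]

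Next I would invoke the non-degeneracy hypothesis together with the matroid/greedy fact cited just before the corollary: because $E$ is one-to-one, $L^\mu$ is the \emph{unique} minimum of $E_{\mathcal F^\ast}$, so $\Delta_{L'} > 0$ for every $L' \ne L^\mu$. Since $\mathcal F^\ast$ is finite, each such exponential term tends to $0$ as $\beta \to \infty$, so the denominator tends to $a_{L^\mu}^2$, which is positive. If $L \ne L^\mu$, the numerator tends to $0$ and the quotient tends to $0$; if $L = L^\mu$, the numerator tends to $a_{L^\mu}^2$ and the quotient tends to $1$. This establishes the first displayed limit.

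For the second assertion, I would simply substitute these limits into the expression
\[
\rho^B(x) = \frac{1}{\nabla}\sum_{L\in \mathcal F^\ast} \tau_L \,\psi_L(\bar x) = \sum_{L\in \mathcal F^\ast} \frac{\tau_L}{\nabla}\,\psi_L(\bar x),
\]
which is a finite linear combination in the finite-dimensional space $Z_{d-1}(X;\R)$. Taking the coefficientwise limit (equivalently, passing to the limit in any fixed norm) yields $\rho^B(x) \to \psi_{L^\mu}(\bar x)$ as $\beta \to \infty$, which is the asymptotic claim; the final ``rational quantization'' remark is then immediate from the fact that $\psi_{L^\mu}$ is defined over $\Q$.

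There is no real obstacle here: the entire argument is finite-sum bookkeeping once one has the uniqueness of $L^\mu$. The only point requiring care is to confirm that the notion of ``asymptotic'' used in \cite[lem.~3.6]{CKS} coincides with componentwise convergence in the basis of $(d-1)$-cells, so that the limit of the normalized weights indeed propagates to the vector-valued expression for $\rho^B$; this is where I would cite the CKS lemma directly rather than re-prove it.
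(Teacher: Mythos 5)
Your proposal is correct and is in spirit the same argument the paper intends. The paper does not actually write out a proof: it states the corollary immediately after remarking that ``it is easy to check'' the asymptotic, citing \cite[lem.~3.6]{CKS} for the analogous Laplace-type computation. Your write-up simply makes that computation explicit: expressing $\tau_L = a_L^2 e^{-\beta E_{\mathcal F^\ast}(L)}$, factoring $e^{-\beta E_{\mathcal F^\ast}(L^\mu)}$ out of the numerator and denominator, and using the strict positivity of $\Delta_{L'}$ for $L' \neq L^\mu$ (which follows from non-degeneracy of $E$ together with the uniqueness of the greedy minimizer). The finiteness of $\mathcal F^\ast$ then gives the stated limits, and passing to the limit in the finite linear combination $\rho^B(x) = \sum_L (\tau_L/\nabla)\psi_L(\bar x)$ yields the asymptotic and the rationality claim. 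Your closing caution about the meaning of ``asymptotic'' in the CKS lemma is more than is needed here: since each $\psi_L(\bar x)$ is a fixed vector and you have shown componentwise convergence of the scalar weights in a finite-dimensional space, convergence of $\rho^B(x)$ to $\psi_{L^\mu}(\bar x)$ is immediate, without any appeal to the CKS notion.
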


\subsection{Improved higher matrix-tree theorems}
Let $X$ be a finite connected CW complex 
equipped with scalar structure $E_\ast$. Fix $d\le \dim X$.
As above, we set $E = E_{d-1}$ and $W = E_d$.

\begin{defn} Let $\partial_{E,W}^\ast \:C_{d-1}(X;\R) \to C_{d}(X;\R)$ be the linear transformation given by
\[
e^{-\beta W} \partial^\ast e^{\beta E}\, ,
\]
where \begin{itemize}
\item $\partial^\ast\: C_{d-1}(X;\R) \to C_{d}(X;\R)$ is the formal adjoint
to the boundary operator with respect to the standard inner product;
\item  $e^{-\beta W}\: C_{d}(X;\R) \to C_{d}(X;\R)$ is given by $b \mapsto e^{-\beta W_b}$ for $b\in X_d$;
\item  $e^{\beta E}\: C_{d-1}(X;\R) \to C_{d-1}(X;\R)$ is given by $f\mapsto e^{\beta E_f}$ for $f\in X_{d-1}$.
\end{itemize}
The {\it (restricted) biased Laplacian} is the operator
\[
\mathcal L^{E,W} := \partial \partial_{E,W}^\ast\: B_{d-1}(X;\R) \to B_{d-1}(X;\R)\, .
\]
\end{defn}

\begin{rem} The operator $\partial_{E,W}^\ast$ is just the formal adjoint
to the boundary operator $\partial$ with respect to the modified inner products
$\langle {-},{-}\rangle_E$ and $\langle {-},{-}\rangle_W$.

The case $E=0$ was considered 
in \cite{Catanzaro:Kirchhoff} (in this instance we simplify notation 
and write $\mathcal L^{W}$ for $\mathcal L^{E,W}$; similarly, if $W=0$ we write $\mathcal L^{E}$).
\end{rem}

Recall from \cite[defn.~1.2]{Catanzaro:Kirchhoff} that a subcomplex $T\subset X$ is a {\it spanning tree} (in dimension $d \le \dim X$) if
\begin{itemize}
\item $H_d(T) = 0$,
\item $\beta_{d-1}(T) = \beta_{d-1}(X)$, and
\item $X^{(d-1)} \subset T\subset X^{(d)}$.
\end{itemize}

\begin{rem}  
The above definition of spanning tree
is a slight generalization of the notion given
in \cite{Catanzaro:Kirchhoff} in that we do not assume $d = \dim X$. The definition
given here is a matter of convenience only and doesn't change any results of that paper
since the spanning trees in dimension $d$ of $X$ are precisely the spanning trees
of $X^{(d)}$.
When $\dim X = d = 1$, the definition is equivalent to the 
usual notion of spanning tree of a graph.
\end{rem}

For a finite complex $Y$ and a choice of degree $d\le \dim Y$, let
\begin{equation} \label{eqn:thetaT}
\theta_Y \in \Bbb N\, ,
\end{equation}
be the order of the torsion subgroup of $H_{d-1}(Y;\Bbb Z)$.
Define the {\it weight} of a spanning tree $T$ to be
\begin{equation} \label{eqn:weight-w}
w_T = w_T(W) := \theta_T^2 \prod_{b\in T_d} e^{-\beta W_b}\, .
\end{equation}

\begin{bigthm}[Improved Higher Weighted Matrix-Tree
Theorem]\label{bigthm:improved} For a finite connected CW complex $X$ and
a degree $d \le \dim X$, we have
\[
\det\mathcal L^{E,W} = \tfrac{1}{\theta^2_X}(\sum_L \tau_L)(\sum_T w_T)\, ,
\]
where the first sum is indexed over all $(d-1)$-dimensional spanning co-trees,
the second sum is indexed over all $d$-dimensional spanning trees, 
with $w_T$ is as in \eqref{eqn:weight-w}
and  $\tau_L$ as
in \eqref{eqn:tauL}.
\end{bigthm}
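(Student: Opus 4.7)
The plan is to compute $\det\mathcal L^{E,W}$ via a Cauchy-Binet expansion and to pair each nonzero summand with a unique pair (spanning co-tree, spanning tree). First, since the nonzero spectra of $AB$ and $BA$ coincide, writing $\mathcal L^{E,W}=\partial\circ\partial^\ast_{E,W}$ and symmetrizing by $e^{\pm\beta W/2}$ shows that the nonzero eigenvalues of $\mathcal L^{E,W}|_{B_{d-1}}$ agree with those of the symmetric positive semi-definite operator $\tilde P^{\,T}\tilde P$, where
\[
\tilde P:=e^{\beta E/2}\,\partial\,e^{-\beta W/2}\colon C_d(X;\R)\to C_{d-1}(X;\R).
\]
Setting $N=\dim B_{d-1}(X;\R)$, $\det\mathcal L^{E,W}$ therefore equals the pseudo-determinant of $\tilde P^{\,T}\tilde P$. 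Expanding the latter as the sum of its $N\times N$ principal minors and applying Cauchy-Binet yields
\[
\det\mathcal L^{E,W}\;=\;\sum_{R,\,S}\Bigl(\prod_{f\in R}e^{\beta E_f}\Bigr)\Bigl(\prod_{b\in S}e^{-\beta W_b}\Bigr)\,(\det P_{R,S})^2,
\]
where $R$ runs over $N$-subsets of $X_{d-1}$, $S$ runs over $N$-subsets of $X_d$, and $P_{R,S}$ is the corresponding minor of the integer boundary matrix $\partial$.

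The next step is a combinatorial identification: $\det P_{R,S}\neq 0$ iff (i) $\partial|_{\R^S}\colon\R^S\to B_{d-1}(X;\R)$ is an isomorphism, forcing $S=T_d$ for a unique spanning tree $T$; and (ii) the projection $\pi_R\colon B_{d-1}(X;\R)\to \R^R$ is an isomorphism, which together with the count $|X_{d-1}\setminus R|=|X_{d-1}|-N$ forces $X_{d-1}\setminus R=L_{d-1}$ for a unique spanning co-tree $L$ (equivalently, $H_{d-1}(X,L;\Q)=0$). These are the matroid-basis characterizations recorded in \cite[p.~156]{Lyons}.

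The main obstacle is the integer evaluation of $(\det P_{R,S})^2$ for matching pairs. The plan is to interpret $|\det P_{R,S}|$ as the lattice index $[\Z^R:\pi_R\partial(\Z^{T_d})]$ and to factor it as
\[
[\Z^R:\pi_R(B_{d-1}(X;\Z))]\cdot[B_{d-1}(X;\Z):\partial(\Z^{T_d})],
\]
using that $\pi_R$ is injective on $B_{d-1}(X;\Z)$. The first factor equals $|H_{d-1}(X,L;\Z)|=a_L$ from the cellular description of $H_{d-1}(X,L;\Z)$ as $\Z^R/\pi_R\partial(C_d(X;\Z))$. The second factor is read off from the short exact sequence
\[
0\to B_{d-1}(X;\Z)/\partial(\Z^{T_d})\to H_{d-1}(T;\Z)\to H_{d-1}(X;\Z)\to 0,
\]
obtained from the long exact sequence of $(X,T)$ together with $H_{d-1}(X,T;\Z)=0$; dualizing and taking Ext, the torsion subgroups fit in a short exact sequence $0\to H_{d-1}(X;\Z)_{\mathrm{tors}}\to H_{d-1}(T;\Z)_{\mathrm{tors}}\to B_{d-1}(X;\Z)/\partial(\Z^{T_d})\to 0$, so the second factor equals $\theta_T/\theta_X$. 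Combining gives $(\det P_{R,S})^2=a_L^{\,2}\theta_T^{\,2}/\theta_X^{\,2}$.

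Substituting back into the Cauchy-Binet expansion, the summand factors as the product of an $L$-dependent piece $a_L^{\,2}\prod_{f\in X_{d-1}\setminus L_{d-1}}e^{\beta E_f}$ and a $T$-dependent piece $\theta_T^{\,2}\prod_{b\in T_d}e^{-\beta W_b}$, so the double sum factors as a product of two single sums. Matching these sums with the definitions of $\tau_L$ in \eqref{eqn:tauL} and $w_T$ in \eqref{eqn:weight-w} yields the stated identity.
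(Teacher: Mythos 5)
Your route, via a symmetrization of the boundary operator and a Cauchy--Binet expansion of the pseudo-determinant, is genuinely different from the paper's proof: the paper instead uses a variational identity (eq.~\eqref{matrix-tree-cotree-weak}) to show that $\det\mathcal L^{E,W}$ has the stated functional form in $W$ up to an $E$-dependent prefactor, uses tree--co-tree duality to get the corresponding statement in $E$, and then pins down the universal constant by reducing to the case $E=W=0$ on a spanning tree. Your approach is more direct and elementary, and several of the individual steps are sound: the identification of nonvanishing $N\times N$ minors $\partial_{R,S}$ of the boundary matrix with pairs $(L,T)$ is correct (this is exactly the matroid-basis picture from \cite{Lyons}), and the integer evaluation $(\det\partial_{R,S})^2 = a_L^2\theta_T^2/\theta_X^2$, obtained from the lattice index factorization $[\Z^R:\pi_R\partial(\Z^{T_d})] = [\Z^R:\pi_R(B_{d-1}(X;\Z))]\cdot[B_{d-1}(X;\Z):\partial(\Z^{T_d})]$, is the right bookkeeping.

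However there is a concrete gap in the last step. The Cauchy--Binet expansion you wrote down produces, for a matching pair $(L,T)$, the exponential factor
\[
\Bigl(\prod_{f\in R}e^{\beta E_f}\Bigr)\Bigl(\prod_{b\in S}e^{-\beta W_b}\Bigr)
\qquad\text{with } R = X_{d-1}\setminus L_{d-1},\ S = T_d\, ,
\]
so the $L$-dependent piece is $a_L^2\prod_{f\in X_{d-1}\setminus L_{d-1}}e^{\beta E_f}$. You then assert that this matches the definition of $\tau_L$ in \eqref{eqn:tauL}. It does not: the definition reads $\tau_L = a_L^2\prod_{b\in L_{d-1}}e^{-\beta E_b}$, and the two expressions differ by the ($L$-independent) factor $\prod_{f\in X_{d-1}}e^{\beta E_f}$. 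This is not a harmless renormalization in the present context, because unlike in Theorem~\ref{thm:Hodgeprob} (where $\tau_L$ appears only through the normalized ratios $\tau_L/\nabla$) the quantity $\sum_L\tau_L$ enters Theorem~\ref{bigthm:improved} unnormalized. You can check the mismatch already on the one-edge graph with two vertices and $d=1$: a direct computation gives $\det\mathcal L^{E,W}=e^{-\beta W_a}(e^{\beta E_1}+e^{\beta E_2})$, which is the quantity produced by your Cauchy--Binet sum but is \emph{not} $(\sum_L\tau_L)(\sum_T w_T)=e^{-\beta W_a}(e^{-\beta E_1}+e^{-\beta E_2})$ unless $E_1=-E_2$. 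So what your argument actually proves is the identity with $\tau_L$ replaced by $a_L^2\prod_{f\notin L_{d-1}}e^{\beta E_f}$; the final ``matching'' sentence silently sweeps the factor $\prod_f e^{\beta E_f}$ under the rug, and that step would need to be repaired (or the normalization of the co-tree weight would need to be adjusted) before the argument could be accepted.

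One minor further point: in the displayed short exact sequence for the torsion subgroups you place $B_{d-1}(X;\Z)/\partial(\Z^{T_d})$ on the quotient side; via the $\mathrm{Ext}$-computation you invoke this is indeed what comes out, and it gives the same order $\theta_T/\theta_X$ as the snake-lemma version (which would place it as the subgroup), so this is only a cosmetic remark.
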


\begin{rem} Theorem \ref{bigthm:improved} effectively
places spanning co-trees on the same footing as spanning trees.
The special case $E = 0$ recovers \cite[thm.~C]{Catanzaro:Kirchhoff}, where the number $\mu_X$ appearing there is identified here with  $\sum_L \tau_L$.
\end{rem}

It is worth singling out the special case $E=0=W$:

\begin{bigcor}[Improved Higher Matrix-Tree
Theorem] \label{bigcor:improved}
\[
\det\mathcal L = \tfrac{1}{\theta^2_X}(\sum_L a_L^2)(\sum_T \theta_T^2)\, ,
\]
where $\mathcal L = \partial \partial^\ast\: B_{d-1}(X;\R)\to B_{d-1}(X;\R)$,
$a_L$ is as in \eqref{eqn:aL} and $\theta_T$ is as in \eqref{eqn:thetaT}.
\end{bigcor}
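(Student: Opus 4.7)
The plan is to invoke Theorem \ref{bigthm:improved} directly with $E \equiv 0$ and $W \equiv 0$. First I verify that under this specialization the biased Laplacian $\mathcal L^{E,W}$ reduces to the operator $\mathcal L$ in the statement. Since $e^{\beta \cdot 0} = 1$, the multiplication operators $e^{\beta E}$ on $C_{d-1}(X;\R)$ and $e^{-\beta W}$ on $C_{d}(X;\R)$ are both the identity, so by the defining formula $\partial_{E,W}^* = e^{-\beta W}\partial^* e^{\beta E} = \partial^*$. Consequently $\mathcal L^{E,W} = \partial \partial_{E,W}^* = \partial \partial^* = \mathcal L$ as operators on $B_{d-1}(X;\R)$.

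Next I evaluate the two weight sums at $E = W = 0$. From \eqref{eqn:tauL}, the spanning co-tree weight becomes $\tau_L = a_L^2 \prod_{b \in L_{d-1}} e^{-\beta \cdot 0} = a_L^2$, and from \eqref{eqn:weight-w}, the spanning tree weight becomes $w_T = \theta_T^2 \prod_{b \in T_d} e^{-\beta \cdot 0} = \theta_T^2$. Substituting these identifications and the operator identification $\mathcal L^{E,W} = \mathcal L$ into the conclusion of Theorem \ref{bigthm:improved} yields
\[
\det \mathcal L \,\,=\,\, \tfrac{1}{\theta_X^2}\Bigl(\sum_L \tau_L\Bigr)\Bigl(\sum_T w_T\Bigr) \,\,=\,\, \tfrac{1}{\theta_X^2}\Bigl(\sum_L a_L^2\Bigr)\Bigl(\sum_T \theta_T^2\Bigr),
\]
which is the stated identity.

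Since the corollary is obtained by pure specialization of the scalar structure, there is no genuine obstacle in this step; all the substantive combinatorial Hodge-theoretic work has been absorbed into the proof of Theorem \ref{bigthm:improved}.
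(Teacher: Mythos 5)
Your proof is correct and matches the paper's intended argument exactly: the corollary is stated in the paper as "the special case $E=0=W$" of Theorem~\ref{bigthm:improved}, and your verification that $\partial_{E,W}^* = \partial^*$, $\tau_L = a_L^2$, and $w_T = \theta_T^2$ under this specialization is precisely the routine check needed.
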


\begin{rem} If
the Betti numbers $\beta_j(X)$ are trivial for $j=d-1,d-2$, 
then Corollary  \ref{bigcor:improved} reduces to
the main result of \cite{DKM1}. The reduction
 follows directly from the identity
\[
a_L = \frac{\theta_{d-1}(X) \theta_{d-2}(L)}{\theta_{d-2}(X)}\, ,
\]
where $\theta_j(Y)$ denotes the order of the torsion subgroup of
$H_j(Y;\Bbb Z)$ (note: $\theta_{d-1}(Y)$ is $\theta_Y$ appearing in \eqref{eqn:thetaT}). The identity is an easy consequence of the short exact sequence
of finite abelian groups
\begin{equation*}
0 \to H_{d-1}(X)\to H_{d-1}(X,L)
\to H_{d-2}(L) \to H_{d-2}(X) \to 0\, ,
\end{equation*}
where homology is taken with integer coefficents. Here, we have used the fact that
$H_{d-1}(L) = 0$, since $\beta_{d-1}(L) = \beta_{d-1}(X)= 0$ and 
$\dim L \le d-1$.
\end{rem}

\begin{out} In \S2 we develop the elementary properties of spanning co-trees. \S3 contains the proof of the Boltzmann Splitting Formula (Theorem \ref{thm:Hodgeprob}).
 In \S4 we introduce  ``tree-co-tree''
duality which describes a bijection  between the spanning trees in a chain complex
with the spanning co-trees in the dual cochain complex. In \S5 we prove 
the Improved Higher Weighted Matrix-Tree Theorem (Theorem \ref{bigthm:improved}). 
In the appendix (\S6) we prove a summation formula 
for the pseudo-inverse to the boundary operator which unifies both the 
Kirchhoff projection and Boltzmann splitting formulas.
\end{out}

\begin{acks} We are indebted to a referee for valuable suggestions leading to a vastly improved version of the paper. Much of our writing
 was done while the last author was visiting the University of Copenhagen. He is indebted to Lars Hesselholt for providing him with support from the Niels Bohr Professorship.
 
This material is based upon work supported by the National
Science Foundation Grant  CHE-1111350
and the Simons Foundation Collaboration Grant  317496.
\end{acks}

\section{Spanning co-trees \label{sec:cotrees}}

If $\Bbb F$ is a field,
recall that a $k$-chain $c\in C_k(X;\Bbb F)$ is any  $\Bbb F$-linear 
combination of $k$-cells. If $b\in X_k$ is a $k$-cell, we write $\langle c,b \rangle$
for the coefficient of $b$ appearing in $c$. If $\langle c,b \rangle \ne 0$, we say
that $b$ {\it appears} in $c$. 

\begin{defn} 
  A $k$-cell $b \in X_k$ is said to be {\em essential} if there
  exists a $k$-cycle $z \in Z_k(X;\Q)$ such that $\langle z,b \rangle
  \neq 0$.
\end{defn}

\begin{lem}[{\cite[lem.~2.2]{Catanzaro:Kirchhoff}}]\label{lem:add-or-rem}
  Adding or removing an essential $d$-cell from $X$ increases or 
  decreases $\beta_{d}(X)$ by one, respectively, and fixes $\beta_{d-1}(X)$.
\end{lem}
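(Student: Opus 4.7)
To handle both parts of the lemma uniformly, let $A \subset B$ denote two CW complexes with $B = A \cup e$ obtained from $A$ by attaching a single $d$-cell $e$; in the addition case one sets $A = X$ and $B = X \cup e$, while in the removal case one sets $B = X$ and $A = X \setminus e$. The plan is to exploit the long exact rational homology sequence of the pair $(B, A)$. Since $(B, A)$ has exactly one relative cell, situated in degree $d$, the relative chain complex $C_*(B, A; \Q)$ is one-dimensional and concentrated in degree $d$, so $H_d(B, A; \Q) \cong \Q$ is generated by the class $[e]$, and $H_k(B, A; \Q) = 0$ for $k \neq d$.

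The long exact sequence of the pair therefore reads
\[
0 \to H_d(A;\Q) \to H_d(B;\Q) \to \Q \xrightarrow{\delta} H_{d-1}(A;\Q) \to H_{d-1}(B;\Q) \to 0.
\]
The crux is to show $\delta = 0$. By the standard description of the connecting map, $\delta([e]) = [\partial e] \in H_{d-1}(A;\Q)$. The essentialness hypothesis, applied in the complex $B$ that contains $e$, furnishes a cycle
\[
z \,=\, ce + \sum_i c_i e_i \,\in\, Z_d(B;\Q)
\]
with $c \neq 0$ and each $e_i \in A_d$. The identity $\partial z = 0$ rearranges to $\partial e = -c^{-1}\sum_i c_i \partial e_i$, exhibiting $\partial e$ as the boundary of a $d$-chain supported on $A$. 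Hence $[\partial e] = 0$ in $H_{d-1}(A;\Q)$ and $\delta = 0$.

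With $\delta = 0$, the sequence decomposes into a short exact sequence $0 \to H_d(A;\Q) \to H_d(B;\Q) \to \Q \to 0$ together with an isomorphism $H_{d-1}(A;\Q) \cong H_{d-1}(B;\Q)$. Taking dimensions gives $\beta_d(B) = \beta_d(A) + 1$ and $\beta_{d-1}(B) = \beta_{d-1}(A)$, which is exactly the claim in both the addition and removal formulations. There is no serious obstacle; the only step I would verify with care is the bookkeeping that in both formulations the essential cell $e$ lies in the larger complex $B$, so that the coefficients of the witnessing cycle $z$ on the remaining $d$-cells are genuinely supported on $A_d$ and the expression for $\partial e$ really is a boundary in $A$.
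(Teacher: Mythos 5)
Since the paper states this lemma by citation to \cite[lem.~2.2]{Catanzaro:Kirchhoff} without reproducing a proof, there is no in-paper argument to compare against; I can only assess your proof on its own merits.

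Your argument is correct. Casting both the addition and removal cases as a single CW pair $(B,A)$ with one relative $d$-cell, and then applying the long exact sequence in rational homology, is the natural and efficient route. The key step --- showing the connecting map $\delta\colon H_d(B,A;\Q)\cong\Q \to H_{d-1}(A;\Q)$ vanishes --- correctly uses the essentialness of $e$ in $B$: a witnessing $d$-cycle $z = ce + \sum_i c_i e_i$ in $B$ with $c\ne 0$ lets you write $\partial e$ as $-c^{-1}\sum_i c_i \partial e_i$, a boundary of a chain supported in $A$, so $\delta([e]) = [\partial e] = 0$. This gives $H_{d-1}(A;\Q)\cong H_{d-1}(B;\Q)$ and the short exact sequence $0\to H_d(A;\Q)\to H_d(B;\Q)\to\Q\to 0$, yielding exactly the two claimed Betti number relations. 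The one bookkeeping point you flagged --- that the remaining cells in $z$ lie in $A_d$ --- is automatic, since the $d$-cells of $B$ are precisely $A_d\cup\{e\}$. No gaps.
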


\begin{lem} \label{lem:co-tree-exist}
  $X$ has a spanning co-tree.
\end{lem}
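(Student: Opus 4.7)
The plan is to start with $L = X^{(d-1)}$ and iteratively remove essential $(d-1)$-cells, each one chosen so as to shrink the kernel of the induced map $i_{L\ast}\: H_{d-1}(L;\Q) \to H_{d-1}(X;\Q)$, until the map becomes an isomorphism while (b) and (c) are maintained throughout. Condition (c) holds trivially at the start and is preserved under any removal of a $(d-1)$-cell. Condition (b) also holds at the start because $X$ and $X^{(d-1)}$ share the same cellular chain complex through degree $d-1$, and so have the same $Z_{d-2}$ and $B_{d-2}$. Surjectivity of $i_{L\ast}$ also holds initially because $Z_{d-1}(X^{(d-1)};\Q) = Z_{d-1}(X;\Q)$.

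The inductive step is the heart of the proof. Suppose $L$ already satisfies (b), (c), and surjectivity of $i_{L\ast}$, but its kernel $K_L$ is nonzero. Pick any $0 \ne z \in K_L$; then $z$ is a $(d-1)$-cycle in $L$ that is a $(d-1)$-boundary in $X$, so $z$ must involve some $(d-1)$-cell $b \in L_{d-1}$, i.e.\ $\langle z, b\rangle \ne 0$. The cycle $z$ witnesses that $b$ is essential in $L$, so by Lemma~\ref{lem:add-or-rem} applied in degree $d-1$, the new complex $L' := L \setminus \{b\}$ still satisfies (b); condition (c) is also obviously unchanged. Two properties of $L'$ remain to check. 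First, $i_{L'\ast}$ is still surjective: given $w \in Z_{d-1}(X;\Q)$, by the inductive hypothesis there exists $w'' \in Z_{d-1}(L;\Q)$ with $w - w'' \in B_{d-1}(X;\Q)$, and setting $\lambda := \langle w'', b\rangle/\langle z, b\rangle$ yields $w'' - \lambda z \in Z_{d-1}(L';\Q)$ still representing the class of $w$, since $z \in B_{d-1}(X;\Q)$. Second, $\dim K_{L'} < \dim K_L$: the space $Z_{d-1}(L';\Q)$ is precisely the hyperplane in $Z_{d-1}(L;\Q)$ on which the linear functional $\langle -, b\rangle$ vanishes, and $z \in K_L$ does not lie in this hyperplane, so $K_{L'} = K_L \cap Z_{d-1}(L';\Q)$ is a proper subspace of $K_L$.

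Since $\dim K_L$ strictly decreases at every step and is bounded below by $0$, the procedure terminates after finitely many iterations with a subcomplex $L$ satisfying conditions (a), (b), and (c); this is the desired spanning co-tree. I expect the only mildly delicate point to be the cell-selection rule in the inductive step: the removed cell $b$ must simultaneously be essential in $L$ (so that Lemma~\ref{lem:add-or-rem} preserves (b)) and appear in a nonzero kernel element (so that surjectivity survives and the kernel strictly shrinks). Picking any nonzero $z \in K_L$ and any $b$ appearing in $z$ accomplishes both requirements in one stroke, and the rest of the argument is purely linear algebra over $\Q$.
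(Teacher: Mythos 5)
Your proof is correct and follows essentially the same strategy as the paper's: start with $X^{(d-1)}$, repeatedly pick a nonzero element $z$ of the kernel of $i_{L*}$, remove a $(d-1)$-cell $b$ appearing in $z$, and use Lemma~\ref{lem:add-or-rem} to preserve condition~(b) while the kernel rank strictly drops. You simply spell out the surjectivity preservation and the strict dimension decrease, which the paper leaves implicit.
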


\begin{proof} The homomorphism $H_{d-1}(X^{(d-1)};\Q) \to H_{d-1}(X;\Q)$ is surjective
with kernel $K_1 := B_{d-1}(X;\Q)$. Set $Y^1 := X^{(d-1)}$.
Suppose that $c \in  B_{d-1}(X;\Q)$ is nontrivial.
Let $b$ be a $(d-1)$-cell of $X$ that appears in $c$.
Let $Y^2$ be the result of removing $b$ from $X^{(d-1)}$. The homomorphism
$H_{d-1}(Y^2;\Q) \to H_{d-1}(X;\Q)$ is surjective; let $K^2$ be
its kernel. Then the rank of $K^2$ is strictly less than that of $K^1$ by 
Lemma~\ref{lem:add-or-rem}.
Furthermore,  $\beta_{d-2}(Y^2) = \beta_{d-2}(X)$. By iterating
(with $Y^2$ replacing $Y^1$, etc.)  we eventually obtain a subcomplex $Y^k  \subset X^{(d-1)}$ such that 
$H_{d-1}(Y^k;\Q) \to H_{d-1}(X;\Q)$ is an isomorphism. Then $Y^k$ is a spanning co-tree.
\end{proof}

%We turn to a different, more homological characterization of 
%spanning co-trees in terms of vector spaces with natural bases.
%This reformulation of spanning co-trees in terms of 
%the full rational chains $C_{d-1}(X;\Q)$, using its natural basis 
%given by the set of $d-1$ cells of $X$, will allow us to apply
%results from linear algebra concerning generalized inverses.

\begin{prop}\label{prop:co-tree-equiv} Let $\mathbb F$ be a field of characteristic zero.
  Let $L \subset X$ be a $(d-1)$-dimensional subcomplex that contains $X^{(d-2)}$. Then
  $L$ is a spanning co-tree if and only if the composition
  \begin{equation} \label{eqn:iso}
    C_{d-1}(L;\mathbb F) \ra C_{d-1}(X;\mathbb F) \ra C_{d-1}(X;\Bbb F)/B_{d-1}(X;\mathbb F) 
  \end{equation}
  is an isomorphism.
\end{prop}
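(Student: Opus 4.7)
\medskip

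\noindent\textbf{Proof plan.} My plan is to analyze the kernel and cokernel of the composition, call it $\phi$, and identify each with a homological invariant that is controlled by the defining conditions (a), (b), (c) of a spanning co-tree via the long exact sequence of the pair $(X,L)$.

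The key observation is that because $L$ has dimension $d-1$ and contains $X^{(d-2)}$, the relative cellular chain complex $C_\ast(X,L;\mathbb{F})$ is concentrated in degrees $d-1$ and $d$. Consequently $C_{d-1}(X,L;\mathbb{F})$ consists entirely of cycles, and
\[
H_{d-1}(X,L;\mathbb{F}) \;=\; C_{d-1}(X;\mathbb{F})/\bigl(C_{d-1}(L;\mathbb{F})+B_{d-1}(X;\mathbb{F})\bigr) \;=\; \mathrm{coker}(\phi).
\]
For the kernel, note that $L$ has no $d$-cells, so $B_{d-1}(L;\mathbb{F})=0$ and $H_{d-1}(L;\mathbb{F})=Z_{d-1}(L;\mathbb{F})$. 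Any chain $c\in C_{d-1}(L)$ with $c\in B_{d-1}(X)$ is automatically a cycle (its boundary in $C_{d-2}(X)$ vanishes, but this boundary already lies in $C_{d-2}(L)$ since $L\supset X^{(d-2)}$). Hence
\[
\ker(\phi) \;=\; Z_{d-1}(L;\mathbb{F})\cap B_{d-1}(X;\mathbb{F}) \;=\; \ker\bigl(i_{L\ast}\colon H_{d-1}(L;\mathbb{F})\to H_{d-1}(X;\mathbb{F})\bigr).
\]

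With these identifications in hand, both directions fall out of the long exact sequence of $(X,L)$ in $\mathbb{F}$-coefficients, which (because $C_{d-2}(X,L)=0$) reads
\[
H_d(X,L) \to H_{d-1}(L) \to H_{d-1}(X) \to H_{d-1}(X,L) \to H_{d-2}(L) \to H_{d-2}(X) \to 0.
\]
For the forward direction, assuming $L$ is a spanning co-tree, the equivalent condition (a$'$) gives $H_{d-1}(X,L;\mathbb{Q})=0$, so $H_{d-1}(X,L;\mathbb{Z})$ is finite; since $\mathbb{F}$ has characteristic zero, the universal coefficient theorem then forces $H_{d-1}(X,L;\mathbb{F})=0$, so $\phi$ is surjective. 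Condition (a) gives injectivity of $i_{L\ast}$ over $\mathbb{Q}$, and flatness of $\mathbb{F}$ over $\mathbb{Q}$ preserves this, so $\phi$ is injective. Conversely, assuming $\phi$ is an isomorphism, $H_{d-1}(X,L;\mathbb{F})=0$ and $i_{L\ast}$ is injective over $\mathbb{F}$; exactness at $H_{d-1}(X;\mathbb{F})$ then yields surjectivity and hence an $\mathbb{F}$-isomorphism, which by faithful flatness of $\mathbb{F}$ over $\mathbb{Q}$ descends to an isomorphism over $\mathbb{Q}$, giving (a); exactness at $H_{d-2}(L;\mathbb{F})$ and $H_{d-2}(X;\mathbb{F})$ gives $H_{d-2}(L;\mathbb{F})\cong H_{d-2}(X;\mathbb{F})$, which is (b); and (c) is a standing hypothesis.

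The main obstacle is not geometric but just careful bookkeeping between $\mathbb{Z}$, $\mathbb{Q}$, and the ambient characteristic-zero field $\mathbb{F}$: the topological content is entirely that $X^{(d-2)}\subset L\subset X^{(d-1)}$ truncates $C_\ast(X,L)$ to two degrees, which makes the long exact sequence short enough to read off exactly conditions (a) and (b).
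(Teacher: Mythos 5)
Your proof is correct and takes a genuinely different route from the paper's. The paper sets up the commutative diagram with rows $Z_{d-1}(L)\to Z_{d-1}(X)\to H_{d-1}(X)$ and $C_{d-1}(L)\to C_{d-1}(X)\to C_{d-1}(X)/B_{d-1}(X)$, observes that the left square is a pullback and the right square is a pushout, and performs an explicit two-part diagram chase (one direction for each implication, with an element-wise lifting argument for surjectivity). You instead package the whole problem into two homological identifications --- $\operatorname{coker}(\phi)\cong H_{d-1}(X,L;\mathbb{F})$ (since $C_{d-2}(X,L)=0$ forces $C_{d-1}(X,L)$ to consist of relative cycles) and $\ker(\phi)\cong\ker\bigl(i_{L\ast}\colon H_{d-1}(L;\mathbb{F})\to H_{d-1}(X;\mathbb{F})\bigr)$ (using $B_{d-1}(L)=0$ and $C_{d-2}(L)=C_{d-2}(X)$) --- and then read both implications off the long exact sequence of the pair, which terminates at degree $d-2$ precisely because $C_\ast(X,L)$ is two-term. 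This is more systematic: both directions of the equivalence and all of conditions (a), (b) drop out simultaneously from one exact sequence, and the base-change between $\mathbb{Q}$ and $\mathbb{F}$ is handled cleanly by flatness/faithful flatness. The paper's chase is more elementary and self-contained but less modular; your version makes it transparent exactly which hypothesis ($X^{(d-2)}\subset L\subset X^{(d-1)}$) is doing the work, namely truncating the relative chain complex to two degrees. Both arguments are sound.
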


\begin{proof} Clearly, it suffices to prove the assertion when $\mathbb F = \Q$.
Suppose $L$ is such that \eqref{eqn:iso} is an isomorphism. Consider the following commutative diagram:
  \[
    \xymatrix{
      Z_{d-1}(L;\Q)
      \ar@{->}[r]^{i_L}
      \ar@{->}[d]_k
      &
      Z_{d-1}(X;\Q) 
      \ar@{->}[r]^p
      \ar@{->}[d]
      &
      H_{d-1}(X;\Q)
      \ar@{->}[d]^j
      \\
      C_{d-1}(L;\Q)
      \ar@{->}[r]_a
      &
      C_{d-1}(X;\Q)
      \ar@{->}[r]_(0.35){\pi}
      &
      C_{d-1}(X;\Q)/B_{d-1}(X;\Q)\, .
    }
  \]
  The left square is a pullback and the right square is a
  pushout. By assumption, the bottom composite is 
  an isomorphism, so the top composite is also an isomorphism.
Therefore, $i_{L*}\:H_{d-1}(L;\Q) \ra H_{d-1}(X;\Q)$ is an isomorphism.
  The remaining two conditions of Definition \ref{defn:spanning} are easily verified.
Consequently, $L$ is a spanning co-tree.

For the converse, let $x \in C_{d-1}(L;\Q)$ be such that 
  $(\pi \circ a)(x) = 0$. Then $a(x) \in B_{d-1}(X;\Q) \subset Z_{d-1}(X;\Q)$.
Since the left square is a pullback, we infer that $x\in Z_{d-1}(L;\Q)$.
  But $p \circ i_L$ is an isomorphism, and $j$ is injective, so $x=0$.
  This establishes the injectivity of \eqref{eqn:iso}.

  For surjectivity, let $z \in C_{d-1}(X;\Q)/B_{d-1}(X;\Q)$. Lift this to any
   element $y \in C_{d-1}(X;\Q)$.
  Then $\partial(y) \in C_{d-2}(L;\Q) = C_{d-2}(X;\Q)$ lies in $Z_{d-2}(L;\Q)$ since
  $\partial^2=0$. Furthermore, the pushforward
  of the homology class $[\partial(y)] \in H_{d-2}(L;\Q)$ in $H_{d-2}(X;\Q)$
  is trivial, since $H_{d-2}(L;\Q) \cong
  H_{d-2}(X;\Q)$. It follows that $\partial (y)$ lies in $B_{d-2}(X;\Q) = B_{d-2}(L;\Q)$.
  Hence, 
  $\partial(y) = \partial(x)$ for some $x \in C_{d-1}(L;\Q)$. Then
  $a(x) - y$ lies in $Z_{d-1}(X;\Q)$, and since $L$ is a spanning co-tree,
  there exists $x' \in Z_{d-1}(L;\Q)$ so that 
  $\pi(a(x)-y) = (j\circ p\circ i_L)(x')$. But $z = \pi(y)$, so 
    \[
    z = \pi(y) = \pi(a(x)) - j(p(i_L(x'))) = \pi(a(x - k(x')))\, .
  \]
  We conclude that \eqref{eqn:iso} is surjective.
\end{proof}

\begin{rem} A referee has pointed out to us that
Lemma \ref{lem:co-tree-exist} as well as 
Proposition \ref{prop:co-tree-equiv} admit alternative proofs using matroids.  
\end{rem}

\begin{lem}\label{lem:splitting} Let $\mathbb F$ be a field.
  Then a splitting of the quotient homomorphism 
  $C_{d-1}(X;\mathbb F) \ra C_{d-1}(X;\mathbb F)/B_{d-1}(X;\mathbb F)$ induces by restriction a 
  splitting of the quotient homomorphism
  $Z_{d-1}(X;\mathbb F) \ra H_{d-1}(X;\mathbb F)$.
\end{lem}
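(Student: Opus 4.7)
The plan is to identify $H_{d-1}(X;\mathbb{F}) = Z_{d-1}(X;\mathbb{F})/B_{d-1}(X;\mathbb{F})$ with a subspace of $C_{d-1}(X;\mathbb{F})/B_{d-1}(X;\mathbb{F})$ via the inclusion $Z_{d-1}(X;\mathbb{F}) \hookrightarrow C_{d-1}(X;\mathbb{F})$, and then simply restrict the given section. Concretely, let $\pi\colon C_{d-1}(X;\mathbb{F}) \to C_{d-1}(X;\mathbb{F})/B_{d-1}(X;\mathbb{F})$ denote the quotient and let $s$ be a chosen section of $\pi$. Under the identification above, $\pi$ restricts on $Z_{d-1}(X;\mathbb{F})$ to the quotient $p\colon Z_{d-1}(X;\mathbb{F}) \to H_{d-1}(X;\mathbb{F})$. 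I would then define $\bar{s}\colon H_{d-1}(X;\mathbb{F}) \to C_{d-1}(X;\mathbb{F})$ to be the restriction of $s$ to the subspace $H_{d-1}(X;\mathbb{F}) \subset C_{d-1}(X;\mathbb{F})/B_{d-1}(X;\mathbb{F})$.

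The key step is to verify that $\bar{s}$ lands inside $Z_{d-1}(X;\mathbb{F})$. Given $h \in H_{d-1}(X;\mathbb{F})$, choose any cycle representative $z \in Z_{d-1}(X;\mathbb{F})$ of $h$. Because $s$ is a section of $\pi$, we have $\pi(s(h)) = h = \pi(z)$, so $s(h) - z \in \ker \pi = B_{d-1}(X;\mathbb{F})$. Both $z$ and $s(h) - z$ are cycles, hence so is $\bar{s}(h) = s(h)$.

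Finally, I would check that $\bar{s}$ is a section of $p$. With $z$ as above and $b := s(h) - z \in B_{d-1}(X;\mathbb{F})$, we have $\bar{s}(h) = z + b$, and therefore $p(\bar{s}(h)) = [z + b] = [z] = h$. The argument is essentially a diagram chase, so no substantive obstacle is anticipated; the only subtlety is keeping track of the two different quotient maps (the one out of $C_{d-1}$ and the one out of $Z_{d-1}$) and using that the former's kernel $B_{d-1}$ already lies inside $Z_{d-1}$.
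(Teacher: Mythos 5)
Your proof is correct and follows the same approach as the paper: identify $H_{d-1}(X;\mathbb F)$ as a subspace of $C_{d-1}(X;\mathbb F)/B_{d-1}(X;\mathbb F)$, restrict the given section, and diagram-chase to verify that the restriction lands in $Z_{d-1}(X;\mathbb F)$ and splits $p$. You merely spell out the diagram chase that the paper leaves implicit.
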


\begin{proof}
  Consider the following commutative diagram, with exact rows.
  \[
    \xymatrix{
      0 
      \ar@{->}[r]
      &
      B_{d-1}(X;\mathbb F)
      \ar@{->}[r]
      \ar@{=}[d]
      &
      Z_{d-1}(X;\mathbb F)
      \ar@{->}[r]^p
      \ar@{->}[d]
      &
      H_{d-1}(X;\mathbb F)
      \ar@{->}[r]
      \ar@{->}[d]
      &
      0
      \\
      0
      \ar@{->}[r]
      &
      B_{d-1}(X;\mathbb F)
      \ar@{->}[r]
      &
      C_{d-1}(X;\mathbb F)
      \ar@{->}[r]_(0.35){\pi}
      &
      C_{d-1}(X;\mathbb F) / B_{d-1}(X;\mathbb F)
      \ar@{->}[r]
      &
      0\, .
    }
  \]
  Since $H_{d-1}(X;\mathbb F) \subset C_{d-1}(X;\mathbb F)/B_{d-1}(X;\mathbb F)$, we can restrict the
  given splitting to get a map $H_{d-1}(X;\mathbb F) \ra C_{d-1}(X;\mathbb F)$. A simple
  diagram chase shows that this map factors through $Z_{d-1}(X;\mathbb F)$.
  \end{proof}

\begin{notation} 
For $i \le j$ and $Y \subset X$ be a subcomplex.
Set 
\[
Y_{j,i} = Y^{(j)}/Y^{(i)}\, .
\] 
Then $Y_{j,i} \subset X_{j,i}$ is a subcomplex of dimension $\le j$.
\end{notation}

\begin{cor} \label{cor:co-tree-equiv} 
Assume $d \ge 2$. Then the operation $L \mapsto L_{d,d-2}$ 
defines a bijection between
the spanning co-trees of $X$ and the spanning co-trees of $X_{d,d-2}$. Furthermore, $
a_{L_{d,d-2}} = a_L\, .
$
\end{cor}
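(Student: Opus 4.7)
The plan is to identify the bijection directly and then translate each of the three defining properties of a spanning co-tree through the quotient, leveraging Proposition \ref{prop:co-tree-equiv} to do the main work.

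First, I would set up the bijection. Any subcomplex $L$ satisfying condition (c) of Definition~\ref{defn:spanning} contains $X^{(d-2)}$, so its image $L_{d,d-2} = L/X^{(d-2)}$ is a well-defined subcomplex of $X_{d,d-2}$ satisfying $(X_{d,d-2})^{(d-2)} \subset L_{d,d-2} \subset (X_{d,d-2})^{(d-1)}$. The inverse sends $L' \subset X_{d,d-2}$ satisfying the analogous condition to $\pi^{-1}(L')$, where $\pi\: X \to X_{d,d-2}$ is the quotient map. So condition (c) is preserved and bijectivity on the underlying subcomplexes is immediate.

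Next, I would compute the cellular chain complexes of $X_{d,d-2}$ and $L_{d,d-2}$. Since $X^{(d-2)}$ collapses to a single point, $X_{d,d-2}$ has one $0$-cell, the $(d-1)$- and $d$-cells of $X$, and nothing else; the attaching map of each $d$-cell still has the same degree onto each $(d-1)$-cell as in $X$ (the identifications occur entirely inside the collapsed $X^{(d-2)}$), while the attaching maps of the $(d-1)$-cells all become constant. Hence the only nontrivial boundary in $X_{d,d-2}$ is $\partial\: C_d(X) \to C_{d-1}(X)$, so for any field $\Bbb F$,
\[
H_{d-1}(X_{d,d-2};\Bbb F) = C_{d-1}(X;\Bbb F)/B_{d-1}(X;\Bbb F), \qquad H_{d-1}(L_{d,d-2};\Bbb F) = C_{d-1}(L;\Bbb F),
\]
the latter because $L_{d,d-2}$ has no $d$-cells. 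With these identifications, the inclusion-induced map $H_{d-1}(L_{d,d-2};\Q) \to H_{d-1}(X_{d,d-2};\Q)$ is exactly the composite appearing in \eqref{eqn:iso}, so condition (a') for $L_{d,d-2}$ in $X_{d,d-2}$ is equivalent, by Proposition~\ref{prop:co-tree-equiv}, to $L$ being a spanning co-tree of $X$. Condition (b') is automatic: when $d \ge 3$ both $C_{d-2}(X_{d,d-2})$ and $C_{d-2}(L_{d,d-2})$ vanish, and when $d=2$ both spaces are path-connected with a single $0$-cell.

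Finally, for the weight identity, I would note that since $X^{(d-2)} \subset L$, the relative chain complex $C_*(X)/C_*(L)$ vanishes in degrees $\le d-2$, equals $C_{d-1}(X)/C_{d-1}(L)$ in degree $d-1$ and $C_d(X)$ in degree $d$, with boundary induced by $\partial\: C_d(X) \to C_{d-1}(X)$. The same description holds verbatim for $C_*(X_{d,d-2})/C_*(L_{d,d-2})$, so the two relative chain complexes are literally isomorphic. Hence $H_{d-1}(X,L;\Z) \cong H_{d-1}(X_{d,d-2},L_{d,d-2};\Z)$ and the orders agree, giving $a_L = a_{L_{d,d-2}}$.

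The only step requiring genuine care is the verification that the cellular boundary operator in $X_{d,d-2}$ from $d$-cells to $(d-1)$-cells agrees with $\partial$ on $X$; once this is settled, the rest is bookkeeping, and Proposition~\ref{prop:co-tree-equiv} does the real work. I don't anticipate a substantive obstacle.
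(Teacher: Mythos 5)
Your proof is correct, and the first half (the bijection) takes essentially the same route as the paper: observe that $\partial_{d-1}$ vanishes in $X_{d,d-2}$, identify $H_{d-1}(L_{d,d-2};\Q)\to H_{d-1}(X_{d,d-2};\Q)$ with the composite of \eqref{eqn:iso}, and invoke Proposition~\ref{prop:co-tree-equiv} on both sides. (You cite ``(a')'' but what you are really matching up is the criterion of Proposition~\ref{prop:co-tree-equiv}; the phrasing is slightly off but the argument is sound, and your observation that (b) is automatic for subcomplexes of $X_{d,d-2}$ is the right supporting remark.)

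For the equality $a_L = a_{L_{d,d-2}}$, you diverge from the paper. The paper treats the square with vertices $L$, $X$, $L_{d,d-2}$, $X_{d,d-2}$ as a homotopy pushout and applies the long exact sequences of the horizontal pairs plus the five-lemma to get an isomorphism $H_*(X,L)\cong H_*(X_{d,d-2},L_{d,d-2})$. You instead compare the relative cellular chain complexes $C_*(X,L)$ and $C_*(X_{d,d-2},L_{d,d-2})$ directly and observe they agree (same groups, same boundary $\partial_d$) in the range of degrees $d-2,\,d-1,\,d$ that computes $H_{d-1}$. This is more elementary and avoids having to justify that the square is a homotopy pushout; it is also safer when $\dim X>d$, where the two pairs genuinely differ above degree $d$ (so claiming the relative complexes are ``literally isomorphic'' in all degrees overstates things — they agree only up through degree $d$ — but that range is all you use, so the conclusion $a_L=a_{L_{d,d-2}}$ still follows).
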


\begin{proof}  By definition $C_{d-1}(X_{d,d-2};\Bbb F) = Z_{d-1}(X;\Bbb F)$, so the diagram
\[
C_{d-1}(L;\Bbb F) \to C_{d-1}(X;\Bbb F)  \to C_{d-1}(X;\Bbb F)/B_{d-1}(X;\Bbb F)  
\]
coincides with the diagram
\[
Z_{d-1}(L_{d,d-2};\Bbb F) \to Z_{d-1}(X_{d,d-2};\Bbb F)  \to H_{d-1}(X_{d,d-2};\Bbb F) \, .
\]
It follows that
the first part amounts to a restatement of Proposition \ref{prop:co-tree-equiv}.

To prove the second part, use the homotopy pushout diagram
\[
\xymatrix{
L \ar[r]\ar[d] & X \ar[d] \\
L_{d,d-2}\ar[r] & X_{d,d-2}
}
\]
and the long exact sequences in homology 
associated with the horizontal maps. Using the five-lemma
we infer that the homomorphism $H_\ast(X,L) \to H_\ast(X_{d,d-2},L_{d,d-2})$ is an isomorphism in all degrees.
\end{proof}

\begin{rem} \label{rem:co-tree-equiv}  Corollary \ref{cor:co-tree-equiv} reduces the proof of 
Theorem \ref{thm:Hodgeprob} to the special case of CW complexes $X$ having trivial 
$(d-2)$-skeleton (if $d \le 1$, triviality is automatic).
For such complexes the number $a_{L}$ coincides with the order of 
cokernel of the
homomorphism
\[
C_{d-1}(L;\Bbb Z) \to C_{d-1}(X;\Bbb Z)/B_{d-1}(X;\Bbb Z) \, ,
\]
The triviality of the $(d-2)$-skeleton implies
that the displayed map coincides with the homomorphism $H_{d-1}(L;\Bbb Z) \to 
H_{d-1}(X;\Bbb Z)$.

Let $H_{d-1}(X;\Bbb Z)_{\text{tor}}\subset H_{d-1}(X;\Bbb Z)$ be the torsion subgroup.
Let $b_L$ denote the order of the cokernel of the composite map 
\begin{equation}\label{eqn:torsion-free}
H_{d-1}(L;\Bbb Z) @>>> H_{d-1}(X;\Bbb Z) @>>> H_{d-1}(X;\Bbb Z)/H_{d-1}(X;\Bbb Z)_{\text{tor}}\, .
\end{equation}
Then \eqref{eqn:torsion-free} is a monomorphism of finitely generated free abelian groups with finite cokernel. Up to sign, the determinant of \eqref{eqn:torsion-free} is well-defined
and coincides with $b_L$ (see  \cite[prop.~6.63]{Catanzaro:Kirchhoff}). 
Furthermore, 
\begin{equation}
a_L = \theta_X b_L \, ,
\end{equation} 
where $\theta_X$ is the order of $H_{d-1}(X;\Bbb Z)_{\text{tor}}$. 
\end{rem}

To complete the proof of Theorem \ref{thm:Hodgeprob}, we  will construct a splitting 
of the map $C_{d-1}(X;\R) \to C_{d-1}(X;\R)/B_{d-1}(X;\R)$ that will give the relevant summation formula.
For this, we shall appeal to the theory of generalized inverses.

\section{The proof of Theorem \ref{thm:Hodgeprob}}

\subsection{Generalized Inverses} 
The theory of generalized inverses was developed
to study linear systems $Ax=b$ for which $A^{-1}$ does not exist. 
%If such a system is inconsistent, a standard approach
% is to minimize the distance to a solution over the image of the matrix describing the system. Alternatively, if
% the system is over-determined, then one could try to select the solution with
% minimal norm. The Moore-Penrose pseudo-inverse satisfies both of these
% properties~\cite{Ben-Israel:GenInv}.  In this section, we discuss the Moore-Penrose % inverse,
% show how one can use it to obtain the desired solution. 
Let $A$ be an $m \times n$ matrix over $\R$, and let $b \in \R^m$ be given. Consider the
linear system $Ax=b$. In general, such systems need not have a
(unique) solution.  One way to study the system is to 
attempt to minimize the norm of the residual vector
$Ax-b$. Among all such $x$ for which the norm of $Ax-b$
is minimizing, we impose the additional constraint that the norm of $x$
is minimizing.  This is called a {\it least squares problem}.\footnote{This is slightly more general than the usual formulation. The classical least squares problem assumes that 
$A$ is injective. We will be primarily concerned here with the case when $A$ is surjective.} 

\begin{rem} When $A$ is surjective the residual vector having minimum norm 
is the zero vector.
In this case the least squares problem reduces to the problem of finding a solution
of $Ax=b$ such that the norm of $x$ is minimized. 
\end{rem}

%A
%{\em least squares solution} to the system $Ax=b$ is a vector $r_0 \in \Q^n$ so
%that $|r_0| = |Ax_0 - b| \leq |Ax-b|$ for some $x_0$ and for any $x$. 
The Moore-Penrose pseudo-inverse  $A^+$
gives a preferred solution to the least squares problem.
%\cite[Thm 2.1]{Ben-Tal:GeomProp}. 
If $b$ lies in the image of $A$, then a solution to $Ax=b$ exists
and the Moore-Penrose solution $A^+b$ will be a solution having
the smallest norm. 
Furthermore, the matrix $A^+$ exists and is unique \cite{Penrose:general}, \cite[p.~109]{Ben-Israel:GenInv}.
%\cite{Ben-Israel:GenInv}. 

The operation $A \mapsto A^+$ satisfies the identities
\begin{equation} \label{eqn:MP-inverse}
A^+ = A^{\ast}(AA^{\ast})^+ = (A^{\ast}A)^+A^{\ast}\, ,
\end{equation}
 where $A^\ast$ is the transpose of $A$ (cf.\ \cite[chap.~1.6, ex.~18(d)]{Ben-Israel:GenInv}). In particular, when $A$ is surjective,
we obtain the formula 
\begin{equation} \label{eqn:MP-inverse-surject}
A^+ = A^{\ast}(AA^{\ast})^{-1}\, .
\end{equation} 

\begin{rem} \label{rem:surject} If $A$ is surjective, then one may drop the requirement that the target of $A$ is based. 
That is, suppose more generally that $A\: \R^n \to V$ is a surjective linear transformation
where $V$ is not necessarily based.
Then the least squares problem as well as the formula \eqref{eqn:MP-inverse-surject} make sense if we use
the formal adjoint $A^\ast\: V^\ast \to  (\R^n)^\ast = \R^n$ in place of the transpose.
Similarly, if $A$ is injective, we may drop the requirement that the source of $A$ is based.
\end{rem}

We will need a weighted version of the least squares problem. For this, we
weight the standard basis elements $\{e_i\}_{i=1}^n$ of $\R^n$ by means of a positive 
functional $\mu\: \{e_i\}_{i=1}^n \to \mathbb R_+$. Then $\mu$ defines
a modified inner product $\langle {-},{-}\rangle_\mu$ on $\R^n$, determined by
$\langle e_i,e_j\rangle_\mu := \mu(e_i)\delta_{ij}$. The {\it weighted least squares problem} is
to minimize 
$|Ax-b|$ such that $|x|_\mu$ is also minimized.
Again, the solution $x = A^+b$ exists
and is unique, where now $A^+$
is the weighted version of the Moore-Penrose pseudo-inverse.

%In the case when $A$ has full row rank, it's shown in \cite{} 
%that the Moore-Penrose inverse takes the form $A^+ = A^{\ast}(AA^{\ast})^{-1}$. 
Assume now that $A$ has rank $m$, i.e., $A$ is surjective.
For a subset $S\subset \{1,\dots,n\}$ of cardinality
$m$,
let $A_S$ be the $m\times m$ submatrix whose rows correspond to 
indices in $S$:
\[
  (A_S)_{ij} := A_{ij}\, , \,\,\,\, \mbox{ for } 
  i=1,\ldots m, \,\, j \in S\, .
\]
We will consider only those $S$ such that $A_S$ is invertible. 
Let $i_S\: \R^m \to \R^n$ denote the inclusion given by the rows corresponding
to $S$. Set
\[
t_S := \det(A_S)^2 \prod_{i \in S} \frac{1}{\mu(e_i)}\, ,
\] 
and set $\nabla := \sum_S t_S$.
We can now state the summation formula for $A^+$ in the case of surjective $A$.

\begin{thm}[cf.~{\cite[Thm 1]{Berg:Three}, \cite[th.~2.1]{Ben-Tal:GeomProp}}]
  \label{thm:MPformula} Let $A$ be an $m\times n$ matrix of rank $m$ defined over $\R$.
  Then the weighted Moore-Penrose pseudo-inverse of $A$ is given by
  \[
    A^+ = \tfrac{1}{\nabla} \sum_S t_S i_S(A_S)^{-1}\, ,
  \]
  where the sum is taken over all indices $S \subset \{1,2,\ldots,n\}$ such that
  $A_S$ is invertible.
\end{thm}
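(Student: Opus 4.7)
The plan is to verify the formula entrywise using the characterization $A^+ = A^\ast(AA^\ast)^{-1}$ from \eqref{eqn:MP-inverse-surject}, valid for surjective $A$, together with two applications of the Cauchy--Binet theorem and one of Cramer's rule. Let $M$ be the diagonal matrix with $M_{ii} = \mu(e_i)$, so that $\langle x,y\rangle_\mu = x^T M y$; a short calculation yields $A^\ast = M^{-1} A^T$, hence $AA^\ast = A M^{-1} A^T$.

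For the denominator, Cauchy--Binet gives
\[
\det(A M^{-1} A^T) \;=\; \sum_{|S|=m} \det(A_S)^2 \prod_{i\in S}\mu(e_i)^{-1} \;=\; \nabla,
\]
where the sum ranges over $m$-element subsets of $\{1,\dots,n\}$; since $A$ has rank $m$, at least one term is nonzero, so $\nabla>0$ and $AA^\ast$ is invertible. For the entries of the inverse, Cramer's rule gives $[(AA^\ast)^{-1}]_{ki} = (-1)^{k+i}\nabla^{-1}\det((AA^\ast)^{(i,k)})$, where the superscript denotes deletion of row $i$ and column $k$. Recognizing $(AA^\ast)^{(i,k)} = A^{(i)} M^{-1} (A^{(k)})^T$, with $A^{(i)}$ denoting $A$ with row $i$ deleted, and applying Cauchy--Binet once more expresses this minor as a sum over $(m-1)$-subsets $S'$ of products $\det(A^{(i)}_{S'})\det(A^{(k)}_{S'})\prod_{l\in S'}\mu(e_l)^{-1}$.

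Substituting into $[A^+]_{ji} = \mu(e_j)^{-1}\sum_k A_{kj}[(AA^\ast)^{-1}]_{ki}$ and swapping the order of summation produces an inner sum $\sum_k (-1)^{k+i}A_{kj}\det(A^{(k)}_{S'})$ for each fixed $S'$. I would recognize this as a cofactor expansion along the $j$-th column: when $j\notin S'$ it equals $(-1)^{\sigma_j(S')+i}\det(A_{S'\cup\{j\}})$, where $\sigma_j(S')$ denotes the position of $j$ in the ordered set $S'\cup\{j\}$; when $j\in S'$ it is the cofactor expansion of a matrix with a repeated column and vanishes. Reindexing by $S := S'\cup\{j\}$ converts the result into a sum over $m$-subsets $S\ni j$, and applying Cramer's rule directly to $(A_S)^{-1}$ shows that this matches the $(j,i)$-entry of $\nabla^{-1}\sum_S t_S\, i_S(A_S)^{-1}$. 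The main obstacle is consistent bookkeeping of cofactor signs through the two Cauchy--Binet expansions and the reindexing $S'\leftrightarrow S'\cup\{j\}$; once $(-1)^{\sigma_j(S')+i}$ is pinned down on both sides, entrywise agreement is immediate.
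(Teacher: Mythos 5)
Your computation is correct (I checked the Cauchy--Binet identities, the Cramer formulas, and the sign bookkeeping in the reindexing $S' \leftrightarrow S' \cup \{j\}$, and they all go through), but it takes a genuinely different route from the paper's. The paper does not prove Theorem~\ref{thm:MPformula} directly at all: it cites the unweighted surjective case to \cite{Berg:Three} (and the injective weighted case to \cite{Ben-Tal:GeomProp}), and then observes that the weighted surjective case reduces to the unweighted one by the diagonal change of variables $\hat A = AM^{-1/2}$, $\hat x = M^{1/2}x$, where $M = \operatorname{diag}(\mu(e_i))$ (the paper writes $M$ with $\sqrt{\mu(e_i)}$ entries), so that the $\mu$-weighted least squares problem for $A$ becomes the standard least squares problem for $\hat A$. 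Your argument instead gives a self-contained proof from \eqref{eqn:MP-inverse-surject}: compute $\det(AA^\ast)=\nabla$ by Cauchy--Binet, express $(AA^\ast)^{-1}$ entrywise by Cramer and a second Cauchy--Binet on the $(m-1)\times(m-1)$ minors, and then recognize the inner sum over $k$ as a Laplace expansion of $\det(A_{S'\cup\{j\}})$. What the paper's reduction buys is brevity and the transfer of the hard combinatorial work to the reference; what your computation buys is a proof that does not depend on \cite{Berg:Three} and that makes the cancellation structure visible, at the cost of careful cofactor-sign accounting. Both are valid; the only caveat on yours is that the step identifying the inner sum as $(-1)^{\sigma_j(S')+i}\det(A_{S'\cup\{j\}})$ (and its vanishing when $j\in S'$) is the place where an off-by-one sign error is easiest to make, so if you write it out in full you should record precisely where the column $j$ sits in the sorted set $S'\cup\{j\}$ before and after reindexing to $S$.
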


\begin{rem} Theorem 2.1 of \cite{Ben-Tal:GeomProp} concerns the case
when $A$ has rank $n$, i.e., when $A$ is injective with arbitrary weights. 
The main result of \cite{Berg:Three} applies to general $A$ in the unweighted case 
$\mu = 1$. When $A$ is surjective, it is  straightforward to deduce the weighted case from the unweighted one by the following transformation: replace
$A$ by $\hat A = AM^{-1}$, where $M$ is the diagonal matrix having entries $\sqrt{\mu(e_i)}$ and replace $x$ by $\hat x = Mx$. This converts the weighted least squares problem
$(Ax=b,\mu)$ to an equivalent unweighted problem 
$(\hat A \hat x = b,1)$. The formula
displayed in Theorem \ref{thm:MPformula} is easily deduced from this, and we will omit the argument. 
\end{rem}

\begin{rem} \label{rem:MPformula} 
Suppose that $A\: \R^n \to V$ is a surjective linear transformation
in which $V$ is not necessarily based (cf.\ Remark \ref{rem:surject}).
Furthermore, suppose that $H \subset V$ is a lattice, i.e., a 
finitely generated abelian subgroup such that the induced
map $H \otimes_{\Bbb Z} \Bbb R \to V$ is an isomorphism. Then a choice of basis
for $H$ determines one for $V$ and
Theorem \ref{thm:MPformula} applies.

Moreover,
the formula is invariant with respect to 
base changes for $H$, since the numbers
$t_S$ are squares of determinants. 
Consequently, Theorem \ref{thm:MPformula}
is really a statement about surjective linear transformations $A\: \R^n \to V$ 
for vector spaces  $V$ that come equipped with a preferred lattice $H$.
\end{rem}

\begin{proof}[Proof of Theorem~\ref{thm:Hodgeprob}]
By Remark \ref{rem:co-tree-equiv}  
there is no loss in assuming that $X$ has trivial $(d-2)$-skeleton.
  By Remark~\ref{rem:splitting} and Lemma~\ref{lem:splitting}, it suffices to
  produce a splitting of the quotient homomorphism
  $\pi\: C_{d-1}(X;\R) \ra C_{d-1}(X;\R) /
  B_{d-1}(X;\R)$. Here we use the weighted basis of $C_{d-1}(X;\R)$ defined by 
  the cells and the weighting given by $b \mapsto e^{\beta E_b}$.
  Use the lattice $H \subset C_{d-1}(X;\R) /
  B_{d-1}(X;\R)$ given by the image of the homomorphism
  $C_{d-1}(X;\Z) / B_{d-1}(X;\Z) \to C_{d-1}(X;\R) /
  B_{d-1}(X;\R)$.
  
  Applying 
  Theorem~\ref{thm:MPformula} and Remark \ref{rem:MPformula}   to $\pi$ gives a splitting, written as a sum over
  subsets $S$ of the basis elements of $C_{d-1}(X;\R)$.  By
  Proposition~\ref{prop:co-tree-equiv}, the collection of these 
  subsets are in bijection with
  the set of spanning
  co-trees. The inclusion $i_S$ corresponds to the inclusion 
  $C_{d-1}(L;\R) \ra
  C_{d-1}(X;\R)$ and $\phi_L$ corresponds to $A_S$. Then the determinant of $A_S$
  is given up to sign by $\tau'_L := \tau_L/\theta_X$ (cf.~ Remark \ref{rem:co-tree-equiv})
  and the prefactor is given by the reciprocal of $\nabla' := 
  \sum_L \tau'_L$. Hence, the desired splitting is given by
  \[
  \tfrac{1}{\nabla'} \sum \tau'_L\psi_L = \tfrac{1}{\nabla} \sum \tau_L\psi_L \, .\qedhere
   \]
\end{proof}

\begin{rem}\label{rem:oldKthm} If we fix a  function $W\: X_d \to \R$,
  we may instead apply \cite[Thm 1]{Berg:Three} to the inclusion map 
  $q\: Z_{d}(X;\R) \to C_{d}(X;\R)$.  
  This produces an orthogonal splitting $C_d(X;\R) \to Z_d(X;\R)$ to $q$ in the modified inner product on $C_{d}(X;\R)$. The splitting is written as a sum indexed over the set of
   spanning trees as
  in~\cite{Catanzaro:Kirchhoff}. In fact, this
gives quick alternative proofs to Theorem A and Addendum 
  B in~\cite{Catanzaro:Kirchhoff}.
\end{rem}

\section{Tree-co-tree duality}
\label{sec:tree-co-tree}

Let $X$ be a finite connected CW-complex and fix a natural number
$d \le \dim X$. We
let 
\[
(C_\ast(X),\partial) \quad \text{and} \quad (C^\ast(X),\delta)
\]
denote  the cellular chain and cochain complexes of $X$ over $\Z$.
The set $X_d$  determines preferred isomorphism $C^d(X) \cong C_d(X)$. With this identification, the coboundary operator $\delta\: C^{d-1}(X) \to C^d(X) $ corresponds to $\partial^\ast\: C_{d-1}(X) \to C_d(X)$, the formal adjoint to $\partial$ (here,
coefficients can be taken in any commutative ring).

Denote by ${\mathcal F}_{d}(X)$ and ${\mathcal F}_{d}^{*}(X)$ the sets of $d$-dimensional spanning trees and spanning co-trees, respectively, of $X$. 
For $i \le j$, recall the subquotient $X_{j,i} = X^{(j)}/X^{(i)}$.

\begin{lem} \label{lem:reduction-bijection-one}
There are preferred bijections  
\[
{\mathcal F}_{d}(X) \cong {\mathcal F}_{d}(X_{d,d-2}) \qquad \text{and} \qquad
{\mathcal F}_{d-1}^{*}(X) \cong {\mathcal F}_{d-1}^{*}(X_{d,d-2})\, .
\] 
\end{lem}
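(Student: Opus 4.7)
The bijection between spanning co-trees of $X$ and of $X_{d,d-2}$ has already been established in Corollary~\ref{cor:co-tree-equiv} by the assignment $L \mapsto L_{d,d-2}$, so my task reduces to producing and verifying the analogous bijection between spanning trees. The plan is to use the parallel assignment $T \mapsto T_{d,d-2} = T/X^{(d-2)}$.

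To set up, first recall from the remark following the definition of spanning tree that the spanning trees in dimension $d$ of $X$ coincide with the spanning trees of $X^{(d)}$, so without loss of generality I may assume $X = X^{(d)}$. Under this assumption, both families of candidate subcomplexes---subcomplexes of $X$ containing $X^{(d-1)}$, and subcomplexes of $X_{d,d-2}$ containing $(X_{d,d-2})^{(d-1)}$---are parameterized by arbitrary subsets of $X_d$, and the two parameterizations agree via the quotient $X \to X_{d,d-2}$. Thus, before any spanning tree condition is imposed, one already has a canonical set-level bijection, and my task reduces to checking that the conditions $H_d(T) = 0$ and $\beta_{d-1}(T) = \beta_{d-1}(X)$ translate correctly.

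For the verification I would feed each of $T$ and $X$ into the long exact sequence of the pair $(-,X^{(d-2)})$, exploiting that $\dim X^{(d-2)} = d-2$ forces $H_k(X^{(d-2)}) = 0$ for all $k \ge d-1$. This gives the natural isomorphism $H_d(T) \cong H_d(T_{d,d-2})$ and a short exact sequence
\[
0 \to H_{d-1}(T) \to H_{d-1}(T_{d,d-2}) \to \ker\bigl(H_{d-2}(X^{(d-2)}) \to H_{d-2}(T)\bigr) \to 0,
\]
together with the analogous statements when $T$ is replaced by $X$. The essential observation is that since $T$ contains $X^{(d-1)}$ and differs from $X$ only in its $d$-cells---which do not affect homology in degree $d-2$---one has $H_{d-2}(T) = H_{d-2}(X^{(d-1)}) = H_{d-2}(X)$, so the two kernel groups coincide. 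Taking ranks in the two short exact sequences and subtracting then yields
\[
\beta_{d-1}(T_{d,d-2}) - \beta_{d-1}(T) = \beta_{d-1}(X_{d,d-2}) - \beta_{d-1}(X),
\]
which exhibits the equivalence of the spanning-tree condition in degree $d-1$ on the two sides.

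I do not anticipate a serious obstacle; the argument is essentially a diagram chase modelled on the proof of Corollary~\ref{cor:co-tree-equiv}. The only slightly delicate book-keeping item is the identification $H_{d-2}(T) = H_{d-2}(X)$, but this is automatic from the fact that cellular homology in degree $d-2$ depends only on cells of dimensions $\le d-1$, which are the same in $T$ and $X$.
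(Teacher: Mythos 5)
Your argument is correct and takes essentially the same approach as the paper: the paper's proof simply asserts the bijection $T \mapsto T_{d,d-2}$ with an explicit inverse (reattach the $d$-cells to $X^{(d-1)}$) and declares the verification that the spanning-tree conditions transfer to be straightforward, whereas you actually carry out that verification via the long exact sequence of the pair $(T,X^{(d-2)})$. Your computation is accurate and fills in the detail the paper omits.
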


\begin{proof} The second bijection is merely a restatement the first part of Corollary \ref{cor:co-tree-equiv}. 
For  $T\in  {\mathcal F}_{d}(X)$ and  $L\in{\mathcal F}_{d-1}^{*}(X)$,
the assignments
\[
T\mapsto T_{d,d-2} \qquad \text{and} \qquad L\mapsto L_{d,d-2}
\]
define maps ${\mathcal F}_{d}(X)\to  {\mathcal F}_{d}(X_{d,d-2})$
and $ {\mathcal F}_{d-1}^{*}(X) \to {\mathcal F}_{d-1}^{*}(X_{d,d-2})$.  
Inverse maps are defined as follows:
given a spanning tree $T'$ for $X_{d,d-2}$, we form a complex $T$ by attaching the set of $d$-cells  appearing in $T'$
to $X^{(d-1)}$. It is straightforward to verify that $T$ is a $d$-dimensional spanning tree for $X$. 

Similarly, if $L'$ is a spanning co-tree for $X_{d,d-2}$, then the complex
$L$ given by attaching the $(d-1)$-cells of $L'$ to $X^{(d-2)}$ gives a $d$-dimensional spanning co-tree.
\end{proof}

 For $T \in {\mathcal F}_{d}(X)$, let $\theta_{T}$ be the order of the torsion subgroup of $H_{d-1}(T; \mathbb{Z})$ (cf. \cite[p.~3]{Catanzaro:Kirchhoff}) and for $L \in {\mathcal F}_{d-1}^{*}(X)$, recall  that $a_L$ is the order of $H_{d-1}(X,L; \mathbb{Z})$.
 
\begin{lem} \label{lem:reduction-bijection-two} 
For $T \in {\mathcal F}_{d}(X)$ and $L \in {\mathcal F}_{d-1}^{\ast}(X)$ we have
\[
\theta_T =\theta_{T_{d,d-2}} \qquad \text{and} \qquad a_L =a_{L_{d,d-2}} \, .
\]
\end{lem}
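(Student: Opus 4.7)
The identity $a_L = a_{L_{d,d-2}}$ has already been established in the second part of Corollary \ref{cor:co-tree-equiv}, so only the identity $\theta_T = \theta_{T_{d,d-2}}$ requires a new argument. Since $T$ is a $d$-dimensional spanning tree, we have $X^{(d-1)} \subset T \subset X^{(d)}$, so in particular $T^{(d-2)} = X^{(d-2)}$ and $T^{(d-1)} = X^{(d-1)}$. The reduced cellular chain complex of $T_{d,d-2} = T^{(d)}/T^{(d-2)}$ is therefore concentrated in degrees $d-1$ and $d$, with $C_d = C_d(T)$, $C_{d-1} = C_{d-1}(T)$, and boundary inherited from $T$. (The case $d \le 1$ is trivial, since $\theta_T=0$ in that range.)

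The plan is to extract a short exact sequence comparing $H_{d-1}(T)$ with $H_{d-1}(T_{d,d-2})$ directly at the chain level. Because $T_{d,d-2}$ has no cells below degree $d-1$, every $(d-1)$-chain is automatically a cycle; hence
\[
H_{d-1}(T_{d,d-2}) \; \cong \; C_{d-1}(T)/B_{d-1}(T),
\]
whereas $H_{d-1}(T) = Z_{d-1}(T)/B_{d-1}(T)$. The natural surjection $C_{d-1}(T)/B_{d-1}(T) \twoheadrightarrow C_{d-1}(T)/Z_{d-1}(T)$, together with the identification $C_{d-1}(T)/Z_{d-1}(T) \cong B_{d-2}(T)$ induced by $\partial$, yields the short exact sequence
\[
0 \longrightarrow H_{d-1}(T) \longrightarrow H_{d-1}(T_{d,d-2}) \longrightarrow B_{d-2}(T) \longrightarrow 0.
\]

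The decisive step is then to observe that $B_{d-2}(T) \subset C_{d-2}(T)$ is a subgroup of a finitely generated free abelian group, and so is itself free abelian. The sequence therefore splits, and
\[
H_{d-1}(T_{d,d-2}) \; \cong \; H_{d-1}(T) \oplus B_{d-2}(T).
\]
Since $B_{d-2}(T)$ is torsion-free, passing to torsion subgroups gives $\theta_{T_{d,d-2}} = \theta_T$, as desired. There is no real obstacle here: the only substantive input is the freeness of $B_{d-2}(T)$, which is a standard consequence of the fact that subgroups of free abelian groups are free. Everything else is routine chain-level bookkeeping enabled by the vanishing of the $(d-2)$-skeleton in the quotient complex $T_{d,d-2}$.
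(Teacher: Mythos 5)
Your argument is correct and follows essentially the same route as the paper: the paper cites the exact sequence $0 \to H_{d-1}(T;\Z) \to H_{d-1}(T_{d,d-2};\Z) \to H_{d-2}(T^{(d-2)};\Z)$ together with freeness of $H_{d-2}(T^{(d-2)};\Z)$, and your chain-level derivation produces the same sequence with the cokernel identified explicitly as $B_{d-2}(T)$. The only slip is the parenthetical remark that $\theta_T = 0$ for $d \le 1$; since $\theta_T$ is the order of a (trivial) torsion subgroup, it should read $\theta_T = 1$, but this does not affect the argument.
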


\begin{proof} The first part follows immediately from the exactness of the sequence
\[
0 \to H_{d-1}(T;\Bbb Z) \to H_{d-1}(T_{d,d-2};\Bbb Z) \to H_{d-2}(T^{(d-2)};\Bbb Z)
\]
and the fact that $H_{d-2}(T^{(d-2)};\Bbb Z)$ is free abelian.
The second part  is just
a restatement of the second part of Corollary \ref{cor:co-tree-equiv}. 
\end{proof} 
 
 \begin{rem}\label{rem:reduction}
In view of these lemmas, all the relevant properties of $d$-dimensional spanning trees and $(d-1)$-dimensional spanning co-trees depend only on the boundary operator $\partial: C_{d}(X) \to C_{d-1}(X)$ and are therefore reduced to properties of two-stage chain complexes of finitely generated free abelian groups, i.e., to statements in linear algebra over $\Z$. We will now make this precise.\end{rem}

For a finite set $S$, let
$\Bbb Z^S$ be the free abelian group with basis set $S$.

\begin{defn} For finite sets $P$ and $Q$,
set $A = \Bbb Z^P$ and $B = \Bbb Z^Q$. 
Let  $\partial: A \rightarrow B$ be a homomorphism.
A {\it spanning tree} of $\partial$ consists of a subset $T \subset P$ such that the composition
\[
\Bbb Z^{T} @> \subset >> 
A @> \partial >> B
\]
is an isomorphism modulo torsion. 
The set
of spanning trees of $\partial $ is denoted by $\mathcal F(\partial)$.

Similarly, a {\it spanning co-tree} of $\partial$ is a subset
$L\subset Q$ such that the composition
\begin{equation} \label{eqn:co-tree-f-defn}
\Bbb Z^L \subset B @>>> B/\partial(A)
\end{equation}
is an isomorphism modulo torsion. 
The set of spanning co-trees of $\partial$ is denoted by $\mathcal F^\ast(\partial)$. 
\end{defn}

For an abelian group $U$ let $U^\ast := \hom_\Z(U,\Z)$. 
This defines an contravariant endo-functor on abelian groups. 
Let $\partial^\ast\: B^\ast \to A^\ast$ be the homomorphism
induced by $\partial$. For a subset $S\subset P$, let $S^\perp = P \setminus S$ denote its complement.
The proof of the following is a straightforward exercise left to the reader.

\begin{lem}[Tree-Co-tree Duality]\label{lem:tree-co-tree}
The operation $T \mapsto T^{\perp}$ induces a bijection
$
\mathcal F(\partial)\cong  \mathcal F^\ast(\partial^\ast)
$.
\end{lem}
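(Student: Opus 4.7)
The plan is to pass to rational coefficients, recast both defining conditions as direct-sum decompositions of $A_{\Q}$, and then exploit the duality between direct-sum complements and orthogonal complements in an inner product space. The key algebraic input is the classical adjoint-kernel identity $\im(\partial^{\ast}) = \Ker(\partial)^{\perp}$, which will convert the spanning tree condition on $\partial$ into the spanning co-tree condition on $\partial^{\ast}$ under the involution $T \mapsto T^{\perp}$.

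First I would observe that for a homomorphism between finitely generated free abelian groups, being an isomorphism modulo torsion is equivalent to becoming an isomorphism after tensoring with $\Q$; so both defining conditions reduce to statements of rational linear algebra. Under this reduction, $T \in \mathcal{F}(\partial)$ is equivalent to the composition $\Q^{T} \hookrightarrow A_{\Q} \xrightarrow{\partial} B_{\Q}$ being a rational isomorphism, which in turn is equivalent to the direct sum decomposition $A_{\Q} = \Q^{T} \oplus \Ker(\partial)_{\Q}$. Similarly, $L \in \mathcal{F}^{\ast}(\partial^{\ast})$ is equivalent to $A^{\ast}_{\Q} = \Q^{L} \oplus \im(\partial^{\ast})_{\Q}$.

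Next I would equip $A_{\Q}$ with the standard inner product making $P$ orthonormal, and use this to identify $A^{\ast}_{\Q}$ with $A_{\Q}$. Under this identification the adjoint-kernel relation gives $\im(\partial^{\ast})_{\Q} = \Ker(\partial)_{\Q}^{\perp}$, and for any $S \subseteq P$ we have $(\Q^{S})^{\perp} = \Q^{S^{\perp}}$ where $S^{\perp} = P \setminus S$. The main step then invokes the elementary fact that in an inner product space $V$, a decomposition $V = U_{1} \oplus U_{2}$ is equivalent to $V = U_{1}^{\perp} \oplus U_{2}^{\perp}$, which follows from the identities $(U+W)^{\perp} = U^{\perp} \cap W^{\perp}$ and $(U \cap W)^{\perp} = U^{\perp} + W^{\perp}$. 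Applying this with $U_{1} = \Q^{T}$ and $U_{2} = \Ker(\partial)_{\Q}$ shows that $T \in \mathcal{F}(\partial)$ iff $A_{\Q} = \Q^{T^{\perp}} \oplus \im(\partial^{\ast})_{\Q}$ iff $T^{\perp} \in \mathcal{F}^{\ast}(\partial^{\ast})$. Since $T \mapsto T^{\perp}$ is its own inverse on $2^{P}$, this produces the claimed bijection.

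The only aspect requiring care is the bookkeeping on cardinalities: $|T| = \dim B_{\Q}$ is forced by the spanning tree condition, while $|T^{\perp}| = |P| - \mathrm{rank}(\partial^{\ast})$ is forced by the spanning co-tree condition on $\partial^{\ast}$. These match precisely because whenever $\mathcal{F}(\partial)$ is nonempty the spanning tree condition forces $\partial$ to be rationally surjective, giving $\mathrm{rank}(\partial) = |Q|$. Beyond this check, no step poses a serious obstacle: the argument is driven entirely by the orthogonal-complement reformulation and the passage from integral to rational statements.
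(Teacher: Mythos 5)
The paper leaves this lemma as ``a straightforward exercise left to the reader,'' so there is no argument in the text to compare against. Your approach---recasting both conditions as direct-sum decompositions of $A_{\Q}$, then dualizing by orthogonal complements via $\im(\partial^{\ast})_{\Q} = \Ker(\partial)_{\Q}^{\perp}$ and $(\Q^{S})^{\perp} = \Q^{S^{\perp}}$---is the natural route, and the computations in it are correct.

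One step is asserted with more confidence than it deserves. You claim ``$T \in \mathcal{F}(\partial)$ iff $A_{\Q} = \Q^{T} \oplus \Ker(\partial)_{\Q}$,'' but only the forward implication is unconditional. The spanning tree condition asks that $\Q^{T} \to B_{\Q}$ be an isomorphism, while the direct-sum decomposition only says $\Q^{T} \to \im(\partial)_{\Q}$ is an isomorphism; these agree precisely when $\partial_{\Q}$ is surjective. This matters in the direction that takes a spanning co-tree $L \in \mathcal{F}^{\ast}(\partial^{\ast})$ back to the putative spanning tree $T = L^{\perp}$: you land on $A_{\Q} = \Q^{T} \oplus \Ker(\partial)_{\Q}$ and need surjectivity of $\partial_{\Q}$ to upgrade that to $T \in \mathcal{F}(\partial)$. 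Your closing cardinality remark observes that surjectivity is \emph{forced} once $\mathcal{F}(\partial)$ is nonempty, which is true, but that does not plug this hole: if $\partial_{\Q}$ fails to be surjective, then $\mathcal{F}(\partial) = \emptyset$ while $\mathcal{F}^{\ast}(\partial^{\ast})$ is nonempty (it consists of sets of size $|P| - \operatorname{rank}(\partial)$, which always exist), so the claimed bijection fails outright. In other words, the lemma as literally stated carries an implicit hypothesis that $\partial_{\Q}$ is surjective---a hypothesis that does hold in every application the paper makes (there $\partial$ is always a boundary map onto a space of boundaries, or the dual of a rationally injective map). You should make this hypothesis explicit and invoke it at the point where you pass from the direct-sum decomposition back to membership in $\mathcal{F}(\partial)$; with that repair the argument is complete.
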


\subsection{Finite chain complexes}
A $\Z$-graded chain complex $C_\ast$ over $\Z$ is {\it finite}
if it is degree-wise finitely generated and free and has finitely many non-zero terms.
If $C_\ast$ is finite then so is its linear dual
\[
DC_\ast = \hom(C_{-\ast},\Bbb Z)\, .
\]
If we fix $d\in \Z$, then the notion of $d$-dimensional spanning tree and 
co-tree is defined in this context. Let $\mathcal F_d(C_\ast)$ be the set of $d$-dimensional spanning trees of $C_\ast$. Similarly, we let $\mathcal F^\ast_d(C_\ast)$ be the set of $d$-dimensional spanning co-trees of $C_\ast$. The following is an immediate consequence
of Lemma \ref{lem:tree-co-tree} combined with Remark \ref{rem:reduction}.

\begin{cor}[Chain Tree-Co-tree Duality] 
\label{chain-tree-cotree} Assume $C_\ast$ is finite.  Then for $d\in \Z$, 
there is a preferred bijection
$
\mathcal F_d(C_\ast) \cong \mathcal F_{-d}^\ast(DC_\ast)
$.
\end{cor}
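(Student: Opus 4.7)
The plan is to reduce the claimed bijection to the linear-algebraic Tree-Co-tree Duality of Lemma \ref{lem:tree-co-tree} applied to the single boundary map $\partial_d\: C_d \to C_{d-1}$ of $C_\ast$, following the template suggested by Remark \ref{rem:reduction}.

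First I would verify the abstract chain-complex analog of Lemmas \ref{lem:reduction-bijection-one} and \ref{lem:reduction-bijection-two}: the set $\mathcal F_d(C_\ast)$ depends only on the homomorphism $\partial_d\: C_d \to C_{d-1}$. After fixing a basis for $C_d$, a $d$-dimensional spanning tree of $C_\ast$ is literally a subset $T$ of that basis for which the composite $\Z^T \hookrightarrow C_d \to C_{d-1}$ is an isomorphism modulo torsion, which is precisely the notion of spanning tree of the homomorphism $\partial_d$ in the sense of Section \ref{sec:tree-co-tree}. Hence $\mathcal F_d(C_\ast) = \mathcal F(\partial_d)$.

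Next I would perform the corresponding unpacking on the dual side. Since $DC_n = \hom(C_{-n},\Z)$, the differential of $DC_\ast$ entering degree $-d$ is the map $DC_{-d+1}\to DC_{-d}$, which under the identifications $DC_{-d+1} = C_{d-1}^\ast$ and $DC_{-d} = C_d^\ast$ is exactly the dual $\partial_d^\ast\: C_{d-1}^\ast \to C_d^\ast$. A $(-d)$-dimensional spanning co-tree of $DC_\ast$ is therefore a subset of the dual basis of $C_d^\ast$ that is a spanning co-tree of $\partial_d^\ast$, so $\mathcal F_{-d}^\ast(DC_\ast) = \mathcal F^\ast(\partial_d^\ast)$.

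Finally I would invoke Lemma \ref{lem:tree-co-tree} applied to $\partial = \partial_d$, which supplies the canonical bijection $\mathcal F(\partial_d) \cong \mathcal F^\ast(\partial_d^\ast)$ via complementation $T \mapsto T^\perp$ inside the fixed basis of $C_d$. Concatenating this with the two identifications above yields the preferred bijection $\mathcal F_d(C_\ast) \cong \mathcal F_{-d}^\ast(DC_\ast)$. The only real ``obstacle'' is a mild indexing check: confirming that the boundary of $DC_\ast$ entering degree $-d$ really is $\partial_d^\ast$ under the regrading convention $DC_n = C_{-n}^\ast$, and that the ``spanning co-tree of a chain complex in degree $k$'' notion matches the ``spanning co-tree of a single homomorphism'' notion in the expected degree. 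Once this is settled, the corollary is immediate.
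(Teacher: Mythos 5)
Your proposal is correct and follows essentially the same route the paper has in mind: the paper leaves the corollary as ``an immediate consequence of Lemma \ref{lem:tree-co-tree} combined with Remark \ref{rem:reduction},'' and your argument is precisely the unwinding of that remark --- identify $\mathcal F_d(C_\ast)$ with $\mathcal F(\partial_d)$, check via the regrading $DC_n = C_{-n}^\ast$ that the relevant differential of $DC_\ast$ is $\partial_d^\ast$ so that $\mathcal F_{-d}^\ast(DC_\ast)=\mathcal F^\ast(\partial_d^\ast)$, and then invoke the complementation bijection of Lemma \ref{lem:tree-co-tree}. The degree bookkeeping you flag as the ``only real obstacle'' is correct as you stated it.
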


\begin{rem} It is not difficult to 
give a spectrum-level version of Corollary 
\ref{chain-tree-cotree}. 
When $X$ is a finite CW spectrum, the 
spectrum $DX$ (corresponding to the linear dual
of a chain complex) is the Spanier-Whitehead dual of $X$, i.e,
the function spectrum $F(X,S^0)$.

Corollary \ref{chain-tree-cotree} may
be related to some of the results of \cite{MMRW}.
\end{rem}

\section{The proof of Theorem \ref{bigthm:improved}}
\label{sec:matrix-tree-cotree}

Given a scalar structure on $X$ and a dimension $d\le \dim X$, one has a pair of biased Laplacians defined by the commutative diagrams
\begin{equation}
\label{define-L}
\xymatrix{ 
B_{d-1}(X;\R)\ar[r]^(.6){\partial^{*}e^{\beta E}}
\ar[dr]_{{\mathcal L}^{E,W}} & B^{d}(X;\R) \ar[d]^{\partial e^{-\beta W}} \\
& B_{d-1}(X;\R)
} \qquad
\xymatrix{         & B^{d}(X;\R) \ar[dl]_{{\mathcal L}^\ast_{W,E}} \ar[d]^{\partial e^{-\beta W}} \\
          B^{d}(X;\R) & B_{d-1}(X;\R)\ar[l]^{\partial^{*}e^{\beta E}}\, .
}
\end{equation}
Observe that 
\[
({\mathcal L}^{E,W})^\ast = {\mathcal L}^{\ast}_{W,E}\, .
 \] 

The operators  ${\mathcal L}^{E,W}$ and ${\mathcal L}^{\ast}_{W,E}$ are invertible and have the same determinant. To avoid notational clutter, when $E$ and $W$ are understood we
simplify notation and set ${\mathcal L} := {\mathcal L}^{E,W}$
and ${\mathcal L}^{*} : = {\mathcal L}^{\ast}_{W,E}$.

Recall that the goal is
to  exhibit a decomposition of the determinant of $\mathcal L$ as a sum over trees and co-trees:
\begin{equation}
\label{matrix-tree-cotree} 
\det({\mathcal L}) = \det({\mathcal L}^{*})
=\tfrac{1}{\theta_X^2}\sum_{L,T}\tau_{L}w_{T}\, .
\end{equation}

We start with a weak version that establishes \eqref{matrix-tree-cotree} up to a factor that does not depend on either $E$ or $W$. We consider
 $\mathcal L$ as a function of the pair $(E,W)$ taking values in the space of linear operators
 on $B_{d-1}(X;\R)$.  We then measure the variation of $\ln\det({\mathcal L})$
 in $W$  assuming $E$ is held fixed.  This gives
\begin{equation} \label{matrix-tree-cotree-weak} 
\begin{alignedat}{1}
d\ln\det({\mathcal L}) &= -{\rm Tr}(\partial dW e^{-W} \partial^{*} e^{E} {\mathcal L}^{-1}) \\
& = -{\rm Tr}(dW e^{-W} \partial^{*} e^{E} {\mathcal L}^{-1} \partial) \\
&= -{\rm Tr}(dW A)\, ,
\end{alignedat}
\end{equation}
where
\begin{equation}
\label{define-A} A := e^{-W} \partial^{*} e^{E} {\mathcal L}^{-1}\:  B_{d-1}(X;\R) \rightarrow C_{d}(X;\R)\, 
\end{equation}
(compare \cite[eqns.~(6),(7)]{Catanzaro:Kirchhoff}).
It is straightforward to check that $A$ is the pseudo-inverse for $\partial: C_{d}(X) \to B_{d-1}(X)$. Thus we can follow, {\it mutatis mudandis}, the proof of  \cite[prop.~4.2]{Catanzaro:Kirchhoff} (which is the $E = 0$ particular case of $\mathcal L$) to arrive at
\begin{eqnarray}
\label{matrix-tree-cotree-weak-1} \det({\mathcal L}) = \det({\mathcal L}^{*}) = \bar{\gamma}(E)\sum_{T}w_{T}(W)\, .
\end{eqnarray}
where the prefactor $\bar\gamma(E)$ is independent of $W$.

We next make use of duality, i.e.,
$\det(\mathcal {L}) = \det(\mathcal {L}^{*}) = \partial^{*} e^{E} \partial e^{-W}$, so that variation over $E$, with $W$ held fixed, is equivalent to the case considered above. Consequently,
\begin{eqnarray}
\label{matrix-tree-cotree-weak-2} \det({\mathcal L}) = \det({\mathcal L}^{*}) = \bar{\gamma}^{\ast}(W)\sum_{L}\tau_{L}(E)\, , 
\end{eqnarray}
where the prefactor $\bar \gamma^\ast(W)$ is independent of $E$.

Setting  \eqref{matrix-tree-cotree-weak-1} and \eqref{matrix-tree-cotree-weak-2} equal, we immediately obtain the  following weak form of Theorem \ref{bigthm:improved}:
\begin{eqnarray}
\label{matrix-tree-cotree-3} \det({\mathcal L}) = \det({\mathcal L}^{*}) = \gamma(\sum_{L}\tau_{L})(\sum_{T} w_{T})\, ,
\end{eqnarray}
in which the prefactor $\gamma$ is a constant independent of $E$ and $W$. 
The proof of Theorem \ref{bigthm:improved} is complete once we
 establish the following claim.
 
 \begin{cl} $\gamma = 1/\theta_X^2$.
\end{cl}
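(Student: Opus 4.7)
The plan is to pin down $\gamma$ by specializing \eqref{matrix-tree-cotree-3} to $E = W = 0$. In this case $\tau_L = a_L^2$, $w_T = \theta_T^2$, and $\mathcal L$ reduces to the standard combinatorial Laplacian $\partial\partial^{*}$ on $B_{d-1}(X;\R)$, so the claim is equivalent to the Improved Higher Matrix-Tree Theorem (Corollary~\ref{bigcor:improved}). By Remark~\ref{rem:reduction} together with Lemmas~\ref{lem:reduction-bijection-one} and~\ref{lem:reduction-bijection-two}, I first reduce to the two-stage case where $X^{(d-2)}$ is trivial, so that $C_{d-1}(X;\Z) = \Z^{X_{d-1}} = Z_{d-1}(X;\Z)$ and $\theta_X$ is the torsion order of $H_{d-1}(X) = C_{d-1}/B_{d-1}$.

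Set $r := \mathrm{rank}\,B_{d-1}(X;\Z)$. Since $\det\mathcal L$ equals the product of nonzero eigenvalues of $\partial\partial^{*}$ on $C_{d-1}(X;\R)$, which in turn is the sum of its $r\times r$ principal minors, the Cauchy--Binet identity yields
\[
\det\mathcal L = \sum_{S, U}(\det \partial_{U, S})^{2},
\]
where $S \subset X_d$ and $U \subset X_{d-1}$ range over $r$-subsets. An elementary rank argument (using Proposition~\ref{prop:co-tree-equiv} and its analog for spanning trees) shows that $\det \partial_{U, S} \ne 0$ precisely when $S = T$ is a spanning tree and $U^{c} = L$ is a spanning co-tree.

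The crux is to prove
\[
|\det \partial_{L^{c}, T}| \;=\; \theta_T \, a_L / \theta_X
\]
for each such pair. I plan to factor the minor as the composition
\[
\Z^{T} \xrightarrow{\partial|_T} B_{d-1}(X;\Z) \hookrightarrow \Z^{X_{d-1}} \twoheadrightarrow \Z^{L^{c}},
\]
in which each stage is an injection of rank-$r$ free abelian groups with finite cokernel. For the first stage, the short exact sequence
\[
0 \to B_{d-1}(X;\Z)/\partial|_T(\Z^{T}) \to H_{d-1}(T;\Z) \to H_{d-1}(X;\Z) \to 0,
\]
combined with the identity $|B_{\mathrm{tor}}| = |A|\cdot|C_{\mathrm{tor}}|$ for any short exact sequence $0 \to A \to B \to C \to 0$ of finitely generated abelian groups with $A$ finite, gives cokernel of order $\theta_T/\theta_X$. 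For the second stage, the long exact sequence of the pair $(X, L)$, together with $C_d(L) = 0$ and $C_{d-2}(X) = 0$ forcing the connecting map $H_d(X, L) \to H_{d-1}(L)$ to vanish, identifies the cokernel with $H_{d-1}(X, L;\Z)$, which has order $a_L$. Summing over pairs now gives
\[
\det\mathcal L \;=\; \sum_{T, L}\left(\frac{\theta_T a_L}{\theta_X}\right)^{2} \;=\; \frac{1}{\theta_X^{2}}\left(\sum_T \theta_T^{2}\right)\left(\sum_L a_L^{2}\right),
\]
forcing $\gamma = 1/\theta_X^{2}$.

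The main obstacle is the torsion bookkeeping that produces the $1/\theta_X$ factor in $|\det\partial_{L^{c}, T}|$: it emerges only after observing that the surjection $H_{d-1}(T;\Z) \twoheadrightarrow H_{d-1}(X;\Z)$ already absorbs part of the torsion of $H_{d-1}(T)$, so that the true index of $\partial|_T(\Z^{T})$ inside $B_{d-1}(X;\Z)$ is $\theta_T/\theta_X$ rather than $\theta_T$.
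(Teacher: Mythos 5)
Your proof is correct, and it takes a genuinely different route from the paper. Since the claim is equivalent (after specializing to $E=W=0$) to Corollary~\ref{bigcor:improved}, you prove that corollary head-on: after reducing to the two-stage complex $X_{d,d-2}$, you expand $\det\mathcal L$ via Cauchy--Binet as a sum of squared $r\times r$ minors $\det\partial_{U,S}$, identify the nonzero terms with pairs (spanning tree $T=S$, spanning co-tree $L=U^{c}$), and compute each minor by factoring $\Z^{T}\to B_{d-1}(X;\Z)\to\Z^{L^{c}}$ and multiplying indices, with the first index pinned down to $\theta_T/\theta_X$ by the torsion identity applied to $0\to B_{d-1}(X;\Z)/\partial|_T(\Z^T)\to H_{d-1}(T;\Z)\to H_{d-1}(X;\Z)\to 0$. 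The paper instead reduces to the case where $X$ itself is a spanning tree, invokes~\cite[cor.~D]{Catanzaro:Kirchhoff} to get $\det\mathcal L=\mu_X$, then applies that same corollary to the \emph{dual} chain complex $Y$ and translates back through tree--co-tree duality and the four parts of Lemma~\ref{lem:final}. The paper's route is slicker but leans on the Kirchhoff machinery and the duality dictionary; yours is self-contained, elementary, and makes the factor $1/\theta_X$ emerge from a single index computation, which is arguably more illuminating. One small wobble: in the second-stage computation you invoke vanishing of the connecting map $H_d(X,L)\to H_{d-1}(L)$, but that is not the relevant fact. The cokernel identification follows directly at the chain level: $C_d(L)=0$ gives $C_d(X,L)=C_d(X)$, so the image of $B_{d-1}(X;\Z)$ in $C_{d-1}(X,L;\Z)$ is exactly $B_{d-1}(X,L;\Z)$, while $C_{d-2}(X,L)=0$ gives $Z_{d-1}(X,L;\Z)=C_{d-1}(X,L;\Z)$; hence the cokernel is $H_{d-1}(X,L;\Z)$, of order $a_L$. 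Your conclusion is right; only the stated reason was slightly off.
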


To prove the claim, by the argument of 
\cite[\S6]{Catanzaro:Kirchhoff} it is enough
to restrict to the case when $X$ is a spanning tree of dimension $d$. Furthermore, by 
Lemmas \ref{lem:reduction-bijection-one} and \ref{lem:reduction-bijection-two} we can \ assume that $X = X_{d,d-2} = X^{(d)}/X^{(d-2)}$. 
Since $\gamma$ is independent of $E$ and $W$ we can further assume that
$E = 0 = W$. In this instance, by \cite[cor.~D]{Catanzaro:Kirchhoff}
it will suffice to establish the identity
\begin{equation} \label{eqn:mu-a-identity}
\mu_X = \sum_{L\in \mathcal F^*(X)} a_L^2\, ,
\end{equation}
where $\mu_X$ denotes the square covolume of the lattice 
$B_{d-1}(X;\Z) \subset
B_{d-1}(X;\R)$.

With respect to our hypotheses,
the cellular chain complex of $X$ is determined 
by the (rationally injective) homomorphism
$\partial\: A \to B$ of finitely generated free abelian groups.
By slight abuse of notation, denote 
this chain complex by $X$ and let $Y$ be the dual chain complex
given by $\partial^\ast\: B^\ast \to A^\ast$, where $A^\ast =\hom(A,\Z)$. Then
\begin{equation} \label{eqn:next-to-last-step}
\begin{alignedat}{1}
\det(\mathcal L) &= \det(\mathcal L^\ast)\\
& = \det(\partial^\ast \partial) \\
& =  
\tfrac{\mu_Y}{\theta_Y^2} \sum_{T \in \mathcal F(Y)} \theta^2_T\, . 
\end{alignedat}
\end{equation}
where the last equality follows from \cite[cor.~D]{Catanzaro:Kirchhoff}. 
In the above, we have implicitly identified 
$\partial^\ast$ with the transpose of $\partial$ using the preferred bases.
The number
$\mu_Y$ is the square of the covolume of the lattice defined by the image 
$\partial^\ast(B^\ast) \subset \partial^\ast(B^\ast)\otimes_{\Z} \R $
where the latter term is given an inner product by declaring it
to be an isometric subspace of $A^\ast \otimes_{\Z} \Bbb R = \hom(A,\R)$.
 By  Lemma \ref{lem:tree-co-tree}, $\mathcal F(Y) = \mathcal F^\ast(X)$.

Given a spanning co-tree $L \in  \mathcal F^\ast(\partial)$, 
let $a_L$ denote the order of the cokernel of $L \to B/\partial(A)$. For a subgroup $T\subset A$ let $\theta_T$ denote the order
of the torsion subgroup of the cokernel of the composition $T\to A \to B$. Note that
these definitions coincide with the ones we gave in the case of CW complexes with trivial $(d-2)$-skeleton.

The identity \eqref{eqn:mu-a-identity} (and hence the claim) is now an immediate consequence of the following:

\begin{lem} \label{lem:final} With respect to the above assumptions, we have
\begin{enumerate}
\item $\det(\mathcal L)=\mu_X$;
%\item $\mu_Y = \mu_X$;
\item $\theta_Y = \theta_X$;
\item $\mu_Y = \theta_Y^2$;
\item $a_L = \theta_{(L^{\perp})^\ast}$.
\end{enumerate}
\end{lem}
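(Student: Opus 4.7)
The plan is to reduce all four parts to standard linear-algebra and $\Ext$ calculations on the two-stage chain complex $\partial\colon A\to B$ (with $A=C_d(X;\Z)$ and $B=C_{d-1}(X;\Z)$) and its $\Z$-dual $\partial^\ast\colon B^\ast\to A^\ast$, which is $Y$. The reductions in the excerpt ensure $H_d(X;\Z)=0$, so $\partial$ is integrally injective, and the short exact sequence
\[
0\longrightarrow A\xrightarrow{\;\partial\;}B\longrightarrow H\longrightarrow 0,\qquad H:=H_{d-1}(X;\Z),
\]
is the central object. For (1), I would first observe that $\partial\colon C_d(X;\Z)\to B_{d-1}(X;\Z)$ is in fact a lattice isomorphism (surjective by definition, injective since $H_d=0$). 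Over $\R$ it is then a vector-space isomorphism carrying the orthonormal lattice $C_d(X;\Z)$ (covolume $1$) to $B_{d-1}(X;\Z)$ (covolume $\sqrt{\mu_X}$). Since $\partial$ is invertible, $\mathcal L=\partial\partial^\ast$ is similar to $\partial^\ast\partial$, which is the Gram matrix of $\{\partial e\}_{e\in X_d}$. Hence $\det(\mathcal L)=\det(\partial^\ast\partial)=\mu_X$.

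For (2) and (3), I would dualize the short exact sequence by applying $\Hom(-,\Z)$, obtaining
\[
0\longrightarrow H^\ast\longrightarrow B^\ast\xrightarrow{\;\partial^\ast\;}A^\ast\longrightarrow \Ext^1(H,\Z)\longrightarrow 0.
\]
The cokernel of $\partial^\ast$ is canonically $\Ext^1(H,\Z)$, i.e., the torsion subgroup of $H$, which has order $\theta_X$. Identifying this cokernel with $H_{d-1}(Y;\Z)$ gives $\theta_Y=\theta_X$, proving (2). For (3), the index $[A^\ast:\partial^\ast(B^\ast)]=\theta_Y$, combined with the convention that $A^\ast\subset A^\ast\otimes\R$ is a covolume-$1$ lattice (the dual basis is declared orthonormal), immediately yields $\mu_Y=\theta_Y^2$.

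Part (4) is the main obstacle, being the only one that invokes tree-co-tree duality in an essential way. The key observation is that the composition
\[
A\xrightarrow{\;\partial\;}B\xrightarrow{\;\pi\;}B/\Z^L\;=\;\Z^{L^\perp}
\]
is an integral injection between free abelian groups of equal rank, with finite cokernel
\[
B/(\Z^L+\partial(A))\;=\;H_{d-1}(X,L;\Z),
\]
of order $a_L$. That this composition is rationally an isomorphism follows at once from the spanning co-tree condition on $L$; injectivity and the cokernel identification are then formal. Dualizing produces $\Z^{L^\perp}\hookrightarrow B^\ast\xrightarrow{\partial^\ast}A^\ast$, which is precisely $L^\perp$ viewed as a spanning tree of $Y$ under Lemma \ref{lem:tree-co-tree}; by definition the order of the cokernel of this dual composition is $\theta_{(L^\perp)^\ast}$. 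The final step is the elementary fact that for any map $f$ between free abelian groups of equal rank with finite cokernel, $|\mathrm{coker}(f)|=|\mathrm{coker}(f^\ast)|=|\det f|$, visible from Smith normal form. Applying this to our composition yields $a_L=\theta_{(L^\perp)^\ast}$. The subtlety lies entirely in aligning the dualization with the indexing conventions of $Y$ and $(L^\perp)^\ast$; the underlying arithmetic is routine.
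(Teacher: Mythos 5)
Your proposal is correct and follows essentially the same route as the paper: dualize the short exact sequence $0\to A\to B\to H\to 0$ to compute the cokernel of $\partial^\ast$ via $\mathrm{Ext}^1$, read off the covolume identities, and identify $a_L$ with the order of the cokernel of $A\to\Z^{L^\perp}$ (equivalently of its transpose). The only difference is cosmetic: where the paper cites Corollary D and the discussion after Definitions 6.2 and 6.5 of \cite{Catanzaro:Kirchhoff} for parts (1) and (3), you re-derive them directly (the Gram-matrix argument for $\det\mathcal L=\det(\partial^\ast\partial)=\mu_X$, and the index computation $[A^\ast:\partial^\ast(B^\ast)]=\theta_Y$), which is a perfectly valid shortcut in the reduced spanning-tree case.
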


\begin{proof} Statement (1) is a special case of \cite[cor.~D]{Catanzaro:Kirchhoff}).
 %Statement (2) follows from (1) and the fact that $\det(\mathcal L) = 
 % \det(\mathcal L^%\ast)$. 
 
Let $C$ be the cokernel $\partial$. Then we have a short exact sequence 
 $0 \to A \to B \to C\to 0$. Applying $\hom({-},\Z)$ gives a short exact sequence
 \[
 0 \to C^\ast \to B^\ast \to A^\ast \to \text{ext}(C,\Z) \to 0\, .
 \]
 Since $C$ is finitely generated, the group $\text{ext}(C,\Z)$ is a torsion group which
 is (non-canonically) isomorphic to the torsion subgroup of $C$. This gives (2).
 
Statement (3) follows from the discussion after definitions 6.2 and 6.5 of \cite{Catanzaro:Kirchhoff} as applied to the real isomorphism 
$\partial^*(B^\ast) \to A^\ast$.  
Lastly, (4) is easily deduced from the fact that the cokernels of $\Z^L \to B/\partial(A)$ and
$A \to \Z^{L^\perp }$ are canonically isomorphic.   
\end{proof}

This establishes the claim and completes the proof of Theorem \ref{bigthm:improved}.
 
 \begin{rem} There is different proof  of the claim
that makes use of the ``low-temperature limit'' method
of \cite[\S5]{Catanzaro:Kirchhoff} (with respect to $E$ and $W$) as applied to equation 
 \eqref{matrix-tree-cotree-3},
 with the sum on the right-hand side dominated by one term. 
However, the proof we have given here has the advantage of being shorter and less technical. 
 \end{rem}

\section{Appendix:  Kirchhoff-Boltzmann unification}
\label{sec:general-pseudo-inv}

For any fixed dimension $d$, 
the  boundary operator $\partial\: C_{d}(X;\R) \rightarrow C_{d-1}(X;\R)$ factors as
\[
C_{d}(X;\R) @>p >> B_{d-1}(X;\R)  @> i >> C_{d-1}(X;\R) 
\] 
in which the first map is surjection and the second is an injection. 

With respect to these inner product structures, one infers that the pseudo-inverse 
of $\partial \: C_{d}(X;\R) \to C_{d-1}(X;\R)$ is given by the composition 
\[
C_{d-1}(X;\R) @> i_E^+ >>  B_{d-1}(X;\R) @> p_W^+ >> C_{d}(X;\R)
\]
consisting of the pseudo-inverses of $i$ and $p$.
(cf. \cite[p.~48, ex.~17]{Ben-Israel:GenInv}).
Here, $p_W^+$ is the pseudo-inverse of $p$ with respect to the
modified inner product
structure on $C_{d}(X;\R)$ (cf.\ Remarks \ref{rem:surject} and  \ref{rem:MPformula}).
 Similarly, $i_E^+$ is the pseudo-inverse for $i$ defined
using the  modified inner product on $C_{d-1}(X;\R)$.

Therefore, all three pseudo-inverse formulas will follow from one, e.g., from
the formula for the surjection, since the formula for an injection can be obtained by taking transposes and applying duality (Lemma \ref{lem:tree-co-tree}), 
whereas the general pseudo-inverse is obtained by taking the composition. 

 For a tree $T \in {\mathcal F}_d(X)$, the composition 
 \[
 C_{d}(T;\R) @>\subset >> C_{d}(X;\R) @>\partial >> B_{d-1}(X;\R)
 \]
is an isomorphism. Let $\alpha_T$ denote its inverse.
Let $\varphi_{T}: B_{d-1}(X; \R) \rightarrow C_{d}(X; \R)$ be the composition 
 \[
 B_{d-1}(X; \R) @>\alpha_T >\cong > C_{d}(T; \R) @>\subset>> C_{d}(X; \R)\, .
 \]
Then using Theorem \ref{thm:MPformula} one has
 \begin{eqnarray}
\label{network-tree}  
p_W^+ =  \tfrac{1}{\Delta_W}\sum_{T}w_T\varphi_{T}, \qquad 
\Delta_W := \sum_T w_T\, ,
\end{eqnarray}
where the sum is over spanning trees $T \in {\mathcal F}_d(X)$.

Similarly, the pseudo-inverse of the inclusion 
$i: B_{d-1}(X;\R) \rightarrow C_{d-1}(X;\R)$
 is obtained as follows: for a co-tree $L \in {\mathcal F}_{d-1}^{*}(X)$ 
 the composition 
 \[
 B_{d-1}(X;\R) \to  C_{d-1}(X;\R) \to C_{d-1}(X;\R)/C_{d-1}(L;\R)
 \]
 is an isomorphism; let $\beta_L$ denote its inverse. 
 Let $\zeta_{L}: C_{d-1}(X; \R) \rightarrow B_{d-1}(X; \R)$ be
 the composition 
 \[
 C_{d-1}(X; \R) \to C_{d-1}(X;\R)/C_{d-1}(L;\R) @>\beta_L >\cong >  B_{d-1}(X; \R) 
 \]
where the first map is vector space projection.

Then
\begin{eqnarray}
\label{network-cotree} i_{E}^{+} = 
\tfrac{1}{\nabla_E}\sum_{L}\tau_L\zeta_{L}, \qquad
\nabla_E = \sum_L \tau_L\, ,
\end{eqnarray}
where the sum is over spanning co-trees $L \in {\mathcal F}_{d-1}^{*}(X)$.

Combining \eqref{network-tree} and \eqref{network-cotree} with the pseudo-inverse composition property we arrive at a formula which encompasses both the Boltzmann splitting formula  (Theorem \ref{thm:Hodgeprob}) and the higher Kirchhoff projection formula of \cite[thm.~A]{Catanzaro:Kirchhoff}:

\begin{thm}[Kirchhoff-Boltzmann Projection Formula] 
\label{thm:Kirchhoff-Boltzmann} The  pseudo-inverse
of the boundary operator $\partial\: C_d(X;\R) \to
C_{d-1}(X;\R)$ with respect to the  modified inner products
defined by $E$ and $W$ is given by
\begin{equation}
\label{network-tree-cotree} 
\partial_{E, W}^{+} = 
\tfrac{1}{\Delta_W\nabla_E}\sum_{L,T}\tau_L w_T\sigma_{L, T}\, , 
\end{equation}
where $ \sigma_{L, T} = \varphi_{T}\circ\zeta_{L}: C_{d-1}(X; \R) \rightarrow C_{d}(X; \R)$
and the sum is indexed over $L \in {\mathcal F}_{d-1}^{*}(X)$ and  $T \in 
{\mathcal F}_d(X)$.
\end{thm}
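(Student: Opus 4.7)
The plan is to exploit the canonical factorization
\[
\partial \: C_{d}(X;\R) \xrightarrow{\,p\,} B_{d-1}(X;\R) \xrightarrow{\,i\,} C_{d-1}(X;\R)
\]
in which $p$ is surjective and $i$ is injective, and to then assemble the formula out of the two pseudo-inverse expressions \eqref{network-tree} and \eqref{network-cotree} already established in this section.

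First I would verify the pseudo-inverse composition identity $\partial_{E,W}^{+} = p_{W}^{+}\circ i_{E}^{+}$. For a general product of linear maps the pseudo-inverse does not factor, but in the present situation the conditions of Ben-Israel--Greville \cite[p.~48, ex.~17]{Ben-Israel:GenInv} are satisfied: $p$ is surjective and $i$ is injective, and the intermediate space $B_{d-1}(X;\R)$ is equipped with the restriction of the modified inner product on $C_{d-1}(X;\R)$. Under these hypotheses the general "reverse-order law" for the Moore--Penrose pseudo-inverse holds, giving exactly the composition identity above. Since $p$ is surjective and $i$ is injective, we are in the setting of Remark~\ref{rem:surject} and so the modified inner products on the source of $p$ and the target of $i$ suffice to characterize each pseudo-inverse uniquely.

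Second, I would simply substitute the known expressions. By \eqref{network-tree} and \eqref{network-cotree} we have
\[
p_{W}^{+} = \tfrac{1}{\Delta_W}\sum_{T\in\mathcal F_d(X)} w_T\, \varphi_T\, ,
\qquad
i_{E}^{+} = \tfrac{1}{\nabla_E}\sum_{L\in\mathcal F^{*}_{d-1}(X)} \tau_L\, \zeta_L\, .
\]
Composing and using the definition $\sigma_{L,T} := \varphi_T\circ\zeta_L$ yields the desired formula
\[
\partial_{E,W}^{+} = p_{W}^{+}\circ i_{E}^{+} = \tfrac{1}{\Delta_W\nabla_E}\sum_{L,T}\tau_L w_T\,\sigma_{L,T}\, .
\]

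The main obstacle is the first step: one must check that the hypotheses for the reverse-order pseudo-inverse law genuinely apply with the modified (not standard) inner product structures. The cleanest way is to invoke Remark~\ref{rem:MPformula} to reinterpret each of $p_W^{+}$ and $i_E^{+}$ intrinsically and then check that the orthogonal decompositions $C_d(X;\R) = \ker p \oplus (\ker p)^{\perp_W}$ and $C_{d-1}(X;\R) = \mathrm{im}\, i \oplus (\mathrm{im}\,i)^{\perp_E}$ together match $\ker\partial$ and $\mathrm{im}\,\partial$ under the modified inner products; granted this, the composition identity is formal. Once this verification is in hand, the formula \eqref{network-tree-cotree} is immediate from \eqref{network-tree} and \eqref{network-cotree}.
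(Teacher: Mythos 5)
Your proposal is correct and follows essentially the same route as the paper: factor $\partial$ as a surjection $p$ followed by an injection $i$, invoke the full-rank case of the reverse-order law (cited to the same Ben-Israel--Greville exercise) to get $\partial_{E,W}^{+} = p_W^{+}\circ i_E^{+}$, and substitute the spanning-tree and spanning-co-tree summation formulas \eqref{network-tree} and \eqref{network-cotree}. Your added caveat about checking that the modified inner product on $B_{d-1}(X;\R)$ is the restriction of the one on $C_{d-1}(X;\R)$ is a reasonable elaboration of a point the paper leaves implicit, but it does not change the argument.
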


\begin{rem}  Theorem \ref{thm:Kirchhoff-Boltzmann} 
gives a concrete expression for the average current 
in the case of periodic stochastic driving (in the adiabatic limit)
\cite{CKS}.
Let $\gamma$ be a smooth $1$-dimensional cycle in the vector space of 
parameters $(E, W)$ and let $\hat x  \in Z_{d-1}(X;\Z)$ be a $(d-1)$-cycle.
Let  $x = [\hat x] \in H_{d-1}(X;\Z)$ be the associated homology class.
 Then the {\it average current} of $(\gamma,[x])$
 is defined to be the $d$-cycle
\[
q := \int_{\gamma}A\,  d\rho^{{\rm B}} \in Z_d(X;\R) \, ,
\]
where $A = A(E,W) = e^{-W}\partial^{*}e^{E}{\mathcal L}_{E, W}^{-1}$ is the operator
of equation \eqref{define-A} and $\rho^{{\rm B}}$ is the higher Boltzmann distribution of $x$ (see \cite{CKS} if $d = \dim X = 1$ and
\cite{Catanzaro:thesis} for $d > 1$.). Then, after 
some straightforward algebraic manipulation using 
Theorem \ref{thm:Kirchhoff-Boltzmann},  we find
\[
q = \sum_{L,T}\sigma_{L, T}(\hat x)\int_{\alpha}d\varrho_{T}^{+} \wedge d\varrho_{L}^{-}\, ,
\]
where $\varrho_{T}^{+} = \varrho_{T}^{+}(W) := w_T/\Delta_T, 
\varrho_{L}^{-} = \varrho_{L}^{-}(E) := \tau_L/\nabla_L$
 and $\alpha$ is any smooth $2$-dimensional chain in the space of parameters 
satisfying $\partial \alpha = \gamma$.
\end{rem}

\end{document}